\newtheoremstyle{myremark}     {10pt}{10pt}{}{}{\bfseries}{.}{.5em}{}
\newtheoremstyle{mynotation}     {10pt}{10pt}{}{}{\bfseries}{.}{.5em}{}
\newtheorem{thm}{Theorem}[section]
\newtheorem{cor}[thm]{Corollary}
\newtheorem{lem}[thm]{Lemma}
\newtheorem{ques}[thm]{Question}
\newtheorem{prop}[thm]{Proposition}
\newtheorem{letterthm}{Theorem}
\newtheorem{lettercor}[letterthm]{Corollary}
\theoremstyle{definition}
\newtheorem{defn}[thm]{Definition}
\newtheorem{exmp}[thm]{Example}
\theoremstyle{myremark}
\newtheorem{rem}[thm]{Remark}
\theoremstyle{mynotation}
\preto\theorem{\medskip}
\DeclareMathOperator{\Cob}{Cob}
\DeclareMathOperator{\fin}{fin}
 \DeclareMathOperator{\IM}{Im}
 \DeclareMathOperator{\mespres}{mp}
 \DeclareMathOperator{\Affn}{Affn}
 \newcommand{\C}{\mathbb{C}}
 \newcommand{\N}{\mathbb{N}}
 \newcommand{\R}{\mathbb{R}}
 \newcommand{\actson}{\curvearrowright}
\newcommand{\defeq}{\vcentcolon=}
\def\C{\mathbb C}
\def\F{\mathbb F}
\def\N{\mathbb N}
\def\Q{\mathbb Q}
\def\R{\mathbb R}
\def\T{\mathbb T}
\def\Z{\mathbb Z}
\newcommand{\CB}{\mathcal{B}}
\newcommand{\CF}{\mathcal{F}}
\newcommand{\CH}{\mathcal{H}}
\DeclareMathOperator{\Fix}{Fix}
\DeclareMathOperator{\id}{id}
\let\ker\relax                                        
\DeclareMathOperator{\ker}{Ker}
\DeclareMathOperator{\Stab}{Stab}
\DeclareMathOperator{\GL}{GL}                                                   
\DeclareMathOperator{\SL}{SL}
\newcommand{\Aut}{\operatorname{Aut}}
\newcommand{\Ann}{\text{Ann}}
\def\thanks#1{\protected@xdef\@thanks{\@thanks
        \protect\footnotetext{#1}}}
\newlength\mylen
\newlist{case}{enumerate}{1}
\setlist[case,1]{label=\textbf{Case~\arabic*.}, 
  labelwidth=\dimexpr-\mylen-\labelsep\relax,leftmargin=0pt,align=right}
\title{On ergodicity of linear actions on $\R^n$ and factoriality of group von Neumann algebras}
\author{
  Soham Chakraborty\textsuperscript{1} \and
  Chinmay Tamhankar\textsuperscript{2}
}
\date{\today}  
\begin{document}

\maketitle

\begingroup
\renewcommand\thefootnote{\arabic{footnote}}
\footnotetext[1]{\hspace{0 em} \faMapMarker : Départment de Mathématiques et Applications, École Normale Supérieure, 45 Rue d'Ulm, 75005 Paris, France. \Letter: \texttt{soham.chakraborty@ens.psl.eu}}

\footnotetext[2]{\hspace{0 em} \faMapMarker : Department of Mathematics, Indian Institute of Technology Madras, 600036 Chennai, India. \Letter: \texttt{chinmay.7.tamhankar@gmail.com}}
\endgroup

\begin{abstract}
    We give some natural conditions on actions of discrete countable groups on abelian locally compact groups of Lie type that imply factoriality of the group von Neumann algebras of their semidirect products. This allows us to give a fairly large class of examples of locally compact groups whose group von Neumann algebras are factors. 
\end{abstract}
\section{Introduction}

Since the advent of the theory of von Neumann algebras, group von Neumann algebras have arguably provided the most important class of examples. Recall that a von Neumann algebra is a $\ast$-subalgebra of the bounded operators $\CB(\CH)$ on a Hilbert space $\CH$ that is closed under the strong operator topology. When the $\ast$-subalgebra is generated by the left regular representation of a locally compact second countable group $G$, the associated von Neumann algebra is called the group von Neumann algebra of $G$ and typically denoted by $L(G)$. In the class of von Neumann algebras, the ones with trivial center are called \textit{factors} and in some sense they form the building blocks of all von Neumann algebras. 

Murray and von Neumann introduced group von Neumann algebras in \cite{MurrayvN4} and showed in \cite[Lemma 5.3.4]{MurrayvN4} that for a discrete countable group $G$, the group von Neumann algebra $L(G)$ is a factor if and only if every non-trivial conjugacy class of $G$ is infinite. Such groups are typically called \textit{infinite conjugacy class (icc)} groups. Beyond the discrete case, however, determining an intrinsic characterization of locally compact groups whose von Neumann algebras are factors remains a challenging open question even today. In fact as in \cite{Vaes25}, such a property should not pass to cocycle twists, which indicates that a simple intrinsic characterization might not even exist. 

There have been some examples of factorial locally compact groups in the literature. For a nice historical overview, we refer the reader to the introduction of \cite{Morando25}. A major class of examples that arises in the literature exploits techniques from ergodic theory. 
It follows from \cite[Theorem VIII]{vonNeumannIII} (see also \cite[Proposition 2.2]{Sutherland78}, \cite[Proposition 14.D.1]{Bekka-delaharpe2020}), that for a non-singular action $\alpha$ of a discrete countable group $\Gamma$ on an abelian locally compact group $N$ by continuous automorphisms, the von Neumann algebra of the semidirect product $L(N \rtimes_{\alpha} \Gamma)$ is isomorphic to the crossed product $L^{\infty}(\widehat{N}) \rtimes_{\widehat{\alpha}} \Gamma$. Here, $\widehat{\alpha}: \Gamma \actson \widehat{N}$ refers to the dual action on the Pontryagin dual of $N$. 

In such a setting, there are precise conditions on the dual action $\Gamma \actson \widehat{N}$ such that the crossed product is a factor (see \cite{Vaes20} or 
\cite[Proposition 6.3]{BCDK24}). This idea has already been used to give examples of factors in \cite{Sutherland78} where Sutherland exploits this to provide two distinct classes (injective and non-injective) of examples of group von Neumann algebras that are factors of any given type. One important necessary condition for $L(N \rtimes \Gamma)$ to be a factor is ergodicity of the dual action $\Gamma \actson \widehat{N}$. When $N$ is countable discrete, ergodicity of $\Gamma \actson \widehat{N}$ turns out to be equivalent to the non-trivial orbits of $\Gamma \actson N$ being infinite. We attempt to achieve a similar characterization of ergodicity of the dual action when $N$ is no longer discrete. 

We deal with the case when $N$ is an abelian group of Lie type, i.e., $N$ is isomorphic to $ \R^m \times \T^n \times D$ for a discrete abelian group $D$. While we do not succeed in giving a complete characterization, we show that under some natural mild conditions on the action $\Gamma \actson N$, the group von Neumann algebra $L(N \rtimes \Gamma)$ is a factor. This allows us to give a somewhat large class of examples of factorial group von Neumann algebras. The first goal for us is to determine if an analogous ergodicity result like the discrete case (see Proposition \ref{Prop: ergodicity iff infinite orbits}) is true for measure preserving actions on $N = \R^n$. In this case one can assume that $\Gamma < \{g \in \GL(n,\R) \; | \; |\det(g)| = 1\}$ and the dual action is precisely the linear action composed with the automorphism $A \mapsto (A^T)^{-1}$. We shall denote the image of a group $\Gamma$ under this automorphism by $\Gamma^T$. In this regard, we are interested in the following natural questions: 

\begin{ques}
\label{question 1}
    For a countable subgroup $\Gamma < \SL(n,\R)$ is it true that $\Gamma \actson \R^n$ is ergodic if and only if $\Gamma^T \actson \R^n$ is ergodic?
\end{ques}

\begin{ques}
\label{question 2}
    Is there a reasonable intrinsic characterization of countable subgroups of $\SL(n,\R)$ which act ergodically on $\R^n$?
\end{ques}

The answer to Question \ref{question 1} is yes when $n = 2$ as the map $A \mapsto (A^T)^{-1}$ is inner. In higher dimensions, lattices and dense subgroups act ergodically and on the other hand abelian groups act non-ergodically (see Example \ref{example: ergodic actions on R^n}). Since these properties are preserved under taking the transpose group, it provides some evidence towards a positive answer to Question \ref{question 1} in general. Surprisingly, Question \ref{question 1} has a negative answer and we give a counterexample in Proposition \ref{Prop: main counterexample}.
Notice that this indicates a negative answer to Question \ref{question 2} as well, since such an intrinsic characterization is not preserved under the group automorphism $A \mapsto (A^T)^{-1}$ of $\SL(n,\R)$. 

\begin{letterthm}
    For all $n \geq 3$, there is a countable discrete subgroup $\Gamma < \SL(n,\R)$ such that the linear action $\Gamma \actson \R^n$ is ergodic while the dual action $\Gamma^T \actson \R^n$ is non-ergodic.
\end{letterthm}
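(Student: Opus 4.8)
The plan is to construct an explicit countable subgroup $\Gamma < \SL(n,\R)$ realizing the asymmetry between the action and its transpose. The cleanest strategy is to work first in dimension $n = 3$ and then stabilize to higher dimensions by taking block-diagonal extensions (embedding the $3\times 3$ example into the top-left corner and acting trivially, or by a harmless diagonal scaling, on the remaining coordinates). The key idea is to exploit the fact that ergodicity of a linear action on $\R^n$ is governed by how orbits distribute with respect to Lebesgue measure, and that transposition can drastically alter the geometry of orbits: a group can be ``spread out'' enough in one representation to leave no nontrivial invariant sets of positive measure, while its transpose is forced to preserve a foliation by a family of invariant subsets (for instance, a pencil of parallel hyperplanes, a cone, or more generally a $\Gamma^T$-invariant measurable function that is nonconstant). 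I would look for $\Gamma$ generated by a pair of carefully chosen matrices---typically one unipotent (or close to unipotent) generator whose transpose preserves a coordinate hyperplane, together with a generator chosen to make the forward action mixing enough to be ergodic.

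The first concrete step is to nail down a usable ergodicity criterion. By the discussion preceding the theorem, ergodicity of $\Gamma \actson \R^n$ is equivalent to the dual/crossed-product condition, so I would phrase everything in terms of the existence of nonconstant $\Gamma$-invariant measurable functions on $\R^n$, or equivalently in terms of invariant measurable subsets of positive and co-positive measure. For the \emph{non-ergodic} side ($\Gamma^T$), the cleanest certificate is to produce an explicit nonconstant measurable $\Gamma^T$-invariant function: if every element of $\Gamma^T$ is lower (or upper) triangular with a common invariant subspace $V$, then the projection to $\R^n / V$ composed with any coordinate gives a nontrivial invariant function up to the unimodular constraint, and one must only check that the induced action on the quotient fails to be ergodic---which is easy if it is, say, abelian or preserves a proper algebraic subvariety of positive codimension whose complement has an invariant measurable transversal. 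I would choose the generators so that $\Gamma^T$ manifestly stabilizes a flag, making non-ergodicity transparent.

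The substance of the argument, and the main obstacle, is proving that $\Gamma \actson \R^n$ \emph{is} ergodic, since one does not get this for free from the triangular structure one imposed to kill ergodicity of the transpose. Here I would aim to verify a Mautner-type or unipotent-mixing phenomenon: show that the orbit closures are large and that there is no nonconstant $\Gamma$-invariant $L^2$ function. The standard route is to assume $f \in L^2(\R^n)$ (or the characteristic function of an invariant set) is $\Gamma$-invariant, pass to the Fourier/spectral side where the linear action becomes the dual linear action on frequencies, and argue that the only frequency fixed by the whole group (almost everywhere with respect to the spectral measure) is the origin, forcing $f$ constant. Concretely, one shows that the $\Gamma$-action on $\R^n \setminus \{0\}$ (frequency space) has all nontrivial orbits unbounded/non-closed, so that any invariant measurable set must be null or conull; unipotent generators are very effective here because the powers $u^k v$ sweep out directions densely. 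The delicate point is to simultaneously satisfy (i) the determinant-one constraint, (ii) enough genuine expansion/mixing in the forward direction to force ergodicity, and (iii) a rigid triangular structure for the transpose; reconciling (ii) and (iii) is where the explicit matrix entries must be chosen with care, and I expect the bulk of the work to be the verification that no nontrivial $\Gamma$-invariant measurable set survives, likely via a direct analysis of how the chosen generators move a hypothetical invariant set's density points.

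Finally, I would record the promotion from $n=3$ to all $n \ge 3$: embedding $\Gamma$ as $\operatorname{diag}(\gamma, I_{n-3})$ preserves determinant one, keeps $\Gamma^T$ non-ergodic (the extra trivial directions only add more invariant functions), and one checks that ergodicity of the $3$-dimensional factor, combined with the trivial action on the complement being handled by an additional generator or by noting the product structure, still needs the ergodicity to be engineered on all of $\R^n$---so in practice I would instead let the extra generators act to guarantee ergodicity across all coordinates while remaining triangular under transposition, rather than acting trivially. The cleanest version of the write-up fixes one family of generators working uniformly in $n$, checks the two invariance computations, and concludes.
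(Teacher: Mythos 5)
Your overall blueprint points in the right direction, and in fact the paper's example has exactly the shape you describe: it takes the stabilizer $\Gamma_0=\{A\in\SL(n,\Z)\mid Ae_1=e_1\}$, whose transpose preserves the foliation of $\R^n$ by the hyperplanes $\{x_1=c\}$ (the coordinate function $x_1$ is $\Gamma_0^T$-invariant precisely because $e_1$ is a common fixed vector of $\Gamma_0$). However, your non-ergodicity certificate is muddled: a group that merely preserves a flag or a subspace $V$ need not act non-ergodically --- $\Gamma_0$ itself preserves the flag $\R e_1\subset\R^n$ and \emph{is} ergodic --- and ``projection to $\R^n/V$ composed with a coordinate'' is not an invariant function unless the induced quotient action is trivial. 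The clean certificate is a nonconstant invariant measurable function, which here comes from demanding a common fixed vector for $\Gamma$; this dualizes to an invariant linear functional for $\Gamma^T$.

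The more serious gap is in your proof that the forward action is ergodic. Showing that all nontrivial orbits in frequency space are unbounded or non-closed does not force invariant measurable sets to be null or conull; this is exactly the failure of the discrete-group heuristic on $\R^n$ that motivates the paper (a single hyperbolic matrix has every nontrivial orbit infinite and unbounded, yet generates a non-ergodic action, as the abelian examples show). You also never commit to explicit generators, so the ``delicate reconciliation'' you flag is left entirely open. The paper closes this step by a different mechanism: writing $\R^n=\R\times\R^{n-1}$, the $\Gamma_0$-action becomes a skew product over the ergodic linear action of $\SL(n-1,\Z)$ on $\R^{n-1}$ with the $\R$-valued multiplicative cocycle $c(g,v)=w^Tv$ for $g=\left(\begin{smallmatrix}1&w^T\\0&A\end{smallmatrix}\right)$, and ergodicity follows once one computes that the essential range of $c$ is all of $\R$ and invokes the skew-product ergodicity criterion. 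Some such cocycle/essential-range argument (or a genuinely carried-out Mautner-type computation) is the missing core of your proof; without it the ergodicity claim is unsupported. As a side benefit, the stabilizer group works uniformly for all $n\ge 3$, so no block-diagonal stabilization or extra generators are needed.
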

As in Definition \ref{Def: dually ergodic}, we call an action $\Gamma \actson \R^n$ \textit{dually ergodic} if $\Gamma^T \actson \R^n$ is ergodic. If $N$ is an abelian group of Lie type (with its standard Borel structure and left invariant Haar measure), then $N \cong N^\circ \times N/N^\circ$ where $N^\circ$ is the connected component of the identity (after choosing a splitting). If $\Phi =  \Gamma \actson N$ is an action by continuous automorphisms, then by connectedness, we have that $\Phi_g(N^\circ) = N^\circ$. The action $\Phi$ is then given by a triple $(\delta,\eta, \omega)$ where $\delta: \Gamma \actson N^\circ$ and $\eta: \Gamma \actson N/N^\circ$ are actions by continuous automorphisms and $\omega: \Gamma \times N/N^\circ \rightarrow N^\circ$ is a `twisted cocycle' (see Remark \ref{Rem: form of actions}). In Proposition \ref{Prop: diagonal action ergodic implies skew product ergodic}, we show that if the dual diagonal action $=\widehat{\delta} \times \widehat{\eta}$ on the Pontryagin dual $\widehat{N}$ is ergodic, then $\widehat{\Phi}$ is ergodic, irrespective of the cocycle $\omega$. 

For ergodicity of the diagonal action $\widehat{\delta} \times \widehat{\eta}$, ergodicity of the component actions are not enough, as demonstrated in Example \ref{Exmp: individual ergodicity not enough}. Hence we have to put a mixing condition on $\Gamma \actson N^\circ$ or $\Gamma \actson N/N^\circ$. Since $N/N^\circ$ is a discrete countable group in this setting, the dual is a compact abelian group and the action $\Gamma \actson \widehat{N/N^\circ}$ is automatically probability measure preserving (pmp) with respect to the Haar measure. Hence if either the non-singular action $\Gamma \actson N^\circ$ is weakly mixing or the pmp action $\Gamma \actson \widehat{N/N^\circ}$ is mixing, we can apply an `ergodicity of diagonal action' result, as in \cite[Theorem 1.1]{GLASNER_WEISS_2016} or \cite[Theorem 2.3]{SchmidtWalters82}. For linear actions on $\R^n$, there are natural classes of examples of actions that are doubly ergodic, for example, actions of lattices (for $n \geq 3$) and dense subgroups of $\SL(n,\R)$ on $\R^n$, and consequently these actions are weakly mixing. Combining this, we prove our second main result: 

\begin{letterthm}
\label{Thm: main theorem on R^n times D, diagonal action ergodic}
    Let $\Gamma$ be a countable discrete group and $\Phi = (\delta,\eta,\omega): \Gamma \actson N$ be an action by continuous Haar-preserving automorphisms on a locally compact abelian group $N$ of Lie type. Suppose that $N$ has no non-trivial compact connected subgroups. Suppose one of the following conditions are satisfied: 
    \begin{enumerate}
        \item $\delta: \Gamma \actson N^\circ$ is faithful and dually doubly ergodic and every orbit (except at the identity) of $\eta: \Gamma \actson N/N^\circ$ is infinite.
        \item $\delta: \Gamma \actson N^\circ$ is faithful and dually ergodic and every stabilizer (except at the identity) of $\eta: \Gamma \actson N/N^\circ$ is finite.  
    \end{enumerate}
    Then $L(N \rtimes_\Phi \Gamma)$ is a factor. Conversely, if $L(N \rtimes_\Phi \Gamma)$ is a factor then $N$ has no compact connected subgroups and $\delta$ is dually ergodic.  
  
\end{letterthm}

We refer the reader to Remark \ref{rem: type of the factor} for a discussion on the type of the resulting factor. Theorem \ref{Thm: main theorem on R^n times D, diagonal action ergodic} allows us to give new examples of factorial semidirect products. In particular we show the following (see Remark \ref{Rem: form of actions} for the notation): 

\begin{lettercor}
\label{Corr: splitting actions factoriality for lattices}
    Let $n \geq 3$ and let $\Gamma < \SL(n,\R)$ be either a lattice or a countable dense subgroup and let $\delta: \Gamma \actson \R^n$ be the linear action. Let $\eta: \Gamma \actson D$ be any action on a discrete abelian group such that every non-trivial $\Gamma$-orbit in $D$ is infinite (see Example \ref{Example: splitting actions on R^m times Z^n}  when $D = \Z^m$). Consider any multiplicative $\delta$- cocycle $\omega: \Gamma \times D \rightarrow \R^n$ for $\eta$. Let $\Phi = (\delta,\eta, \omega)$ be the twisted skew product action on $\R^n \times D$ and let $G = (\R^n \times D) \rtimes_{\Phi} \Gamma$. Then $L(G)$ is a factor. In particular, when $\omega$ is trivial, the diagonal product action gives a factor.     
\end{lettercor}

In fact it is not necessary for the diagonal action to be ergodic for the ergodicity of the twisted action $\widehat{\Phi}$. For example, if the dual action is on $\R^n \times K$ for a compact abelian group $K$, then the component action on $K$ can be trivial. In this case the entire action is a skew product action with a genuine cocycle $c: \Gamma \times \R^n \rightarrow K$. Such a skew product is ergodic if and only if the essential range of the cocycle is $K$ (see \cite{SchmidtLectureNotes}). In Proposition \ref{Prop: stabilizer ergodicity} we give a generalization of Schmidt's skew product ergodicity criterion that handles twisted skew products.  This lets us prove the final main result of this article (see Remark \ref{Rem: form of actions} and Definition \ref{Def: Cob_fin} for notation).

\begin{letterthm}
        \label{Thm: diagonal action nor ergodic}
    Let $\Phi = (\delta,\eta,\omega): \Gamma \actson N \cong N^\circ \times N/N^\circ$ be a Haar-preserving action as before and suppose that $N$ has no compact connected subgroups. Let $\Gamma_\chi$ be the $\eta$-stabilizer of a point $\chi \in N/N^\circ$. Suppose that the following hold: 
    \begin{enumerate}
        \item $\delta: \Gamma \actson N^\circ$ is faithful, 
        \item For each $\chi \in N/N^\circ$, the action $\Gamma_\chi \actson N^\circ$ is dually ergodic,
        \item $\Cob_{\fin}(\omega) = \{e\}$
    \end{enumerate}
    Then $L(N \rtimes_{\Phi} \Gamma)$ is a factor. In particular when $\eta$ is trivial, then condition 2 becomes: $\delta$ is dually ergodic.
\end{letterthm}

The main novelty of this article is that we deal with the twisted cocycles and prove ergodicity results of twisted skew products. Thus the main technical efforts of this article are Propositions \ref{Prop: diagonal action ergodic implies skew product ergodic} and \ref{Prop: stabilizer ergodicity}, where we analyze invariant functions under a twisted skew product action using properties of the twisted cocycles and a strong form of ergodicity of the action on $N^\circ$. 

A natural question that arises from this article is if a similar criterion for factoriality can be found when $N$ is not of Lie type anymore, in particular when $N$ is a totally disconnected (non-discrete) abelian group. As it is often the case in the world of locally compact groups, the situation for Lie groups and totally disconnected groups probably requires different techniques. We point out here that in a recent preprint \cite{Morando25}, Morando provides a criterion for factoriality of totally disconnected group von Neumann algebras and proves for instance that Neretin groups, as well as certain HNN extensions are factorial (see also \cite{Suzuki12} and \cite{Raum19}). It would be interesting to see if some of these methods can be adapted to this setting. Most of our results have obvious counterparts for non-measure preserving actions, in which case the semidirect product groups are non-unimodular. We refer the reader to some recent factoriality results by Miyamoto in \cite{Miyamoto} for the class of almost unimodular groups, introduced by Guinto and Nelson in \cite{guinto2025unimodulargroups}.  
\medskip

\textbf{Organisation of the article:} Section \ref{Sec: preliminaries} contains the necessary preliminaries on Pontryagin duality theory of abelian groups and some techniques from ergodic theory. In Section \ref{Sec: actions on discrete abelian groups}, we give an overview of factoriality results (mostly implicit in the literature) for actions on countable groups. In Section \ref{Sec: actions on R^m}, we deal with actions on connected abelian groups, in particular for $\R^n$. In Section \ref{Sec: cocycles} we develop some new techniques to handle ergodicity of twisted skew product actions. We use this to deal with the case of actions on general abelian groups of Lie type in Section $\ref{Sec: The general case: double ergodicity}$.
\medskip

\textbf{Acknowledgements:} S.C. is supported by the ERC advanced grant 101141693 titled \textit{Noncommutative ergodic theory of higher rank lattices}. S.C. would like to thank Cyril Houdayer and Milan Donvil for some insightful comments on an earlier version of the article and Amine Marrakchi and Basile Morando for some helpful discussions. C.T. is supported by the grant SB22231267MAETWO008573 IoE Phase II. C.T. was partially supported by IIT Madras, IRL ReLaX (CNRS IRL2000), and the grant CNRS IEA GAOA for his travel and stay in Paris during April-May 2025. 

\medskip

\textbf{Mathematics Subject Classification 2020:} 37A15, 46L10, 22D15, 22D45. 

\section{Preliminaries}
\label{Sec: preliminaries}

Unless otherwise stated, in this article we only work with von Neumann algebras with separable preduals, and with locally compact second countable groups. Unless otherwise stated, $\Gamma$ will always denote an infinite countable discrete group. 

\subsection{Ergodic theory of nonsingular actions}

In this section we shall recall some notions and results from ergodic theory, mostly of non-singular actions. Recall that a measurable action of a countable discrete group $\Gamma$ on a standard $\sigma$-finite measure space $(X,\mu)$ is called \textit{nonsingular} if $g_* \mu$ and $\mu$ are absolutely continuous for all $g \in \Gamma$. It is called \textit{measure preserving} if $g_* \mu = \mu$ for all $g \in \Gamma$ and \textit{probability measure preserving (pmp)} if it is measure preserving and $\mu(X) = 1$. A nonsingular action is called ergodic if any $\Gamma$-invariant Borel subset $E \subset X$ is either null or conull. Equivalently, the von Neumann algebra of $\Gamma$-invariant functions, denoted by $L^{\infty}(X,\mu)^{\Gamma} = \C \cdot 1$. In this article, for any locally compact group $G$ with the usual Borel structure and Haar measure, we shall denote by $\Aut(G)$ the topological group of continuous group automorphisms with the compact open topology, by $\Aut_{\mespres}(G)$ the Polish group of Haar measure preserving transformations of $G$ as a measure space and by $\Aut_{\text{ns}}(G)$, the Polish group of nonsingular transformations of $G$, with respect to the usual topology of uniform convergence on Borel subsets. 

An important question in ergodic theory is to determine when, for two nonsingular actions $\Gamma \actson (X,\mu)$ and $\Gamma \actson (Y,\nu)$, the diagonal action $\Gamma \actson (X \times Y, \mu \times \nu)$ is ergodic. A non-singular action $\Gamma \actson (X,\mu)$ is called \textit{weakly mixing} if for any pmp ergodic action $\Gamma \actson (Y,\nu)$, the diagonal action $\Gamma \actson X \times Y$ is ergodic. A non-singular action $\Gamma \actson (X,\mu)$ is called \textit{doubly ergodic} if the diagonal action $\Gamma \actson X \times X$ is ergodic. A pmp action is doubly ergodic if and only if it is weakly mixing. However, for non-singular actions, double ergodicity is strictly stronger than weak mixing. We refer the reader to \cite{GLASNER_WEISS_2016} for a more elaborate discussion and for proofs of the results mentioned here.

Another important property that we are going to use in this article is the notion of a mixing action. A pmp action $\Gamma \actson (X,\mu)$ is called \textit{mixing} if for any non-null Borel subsets $E,F \subseteq X$, we have that $\mu(E \cap gF) - \mu(E)\mu(F) \rightarrow 0$ as $g \rightarrow \infty$. It can be checked that mixing implies weakly mixing which in turn implies ergodicity. Recall that an ergodic action is called \textit{properly ergodic} if every orbit has measure zero. It turns out that a pmp action $\Gamma \actson (X,\mu)$ is mixing if and only if for every properly ergodic non-singular action $\Gamma \actson (Y,\nu)$, the diagonal action $\Gamma \actson X \times Y$ is ergodic. We record this theorem below, which is due to Schmidt and Walters \cite{SchmidtWalters82}. 

\begin{thm}\textup{\cite[Theorem 2.3]{SchmidtWalters82}}
\label{Thm: Schmidt-Walters theorem}
    Let $\Gamma \actson (X,\mu)$ be pmp mixing and let $\Gamma \actson (Y,\nu)$ be non-singular and properly ergodic. Then the diagonal action $\Gamma \actson (X \times Y, \mu \times \nu)$ is ergodic.
\end{thm}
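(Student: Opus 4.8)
The plan is to prove directly that every $\Gamma$-invariant $F \in L^\infty(X \times Y, \mu\times\nu)$ is constant, which is equivalent to ergodicity of the product. First I would reinterpret such an $F$ as an equivariant field of $L^2$-functions. Setting $\phi(y) \defeq F(\cdot, y)$, Fubini shows $\phi$ is a bounded measurable map $Y \to L^2(X,\mu)$, and invariance of $F$ becomes the relation $\phi(gy) = U_g\phi(y)$, where $U_g\xi = \xi\circ g^{-1}$ is the Koopman operator of $\Gamma \actson (X,\mu)$. Crucially, since $\mu$ is preserved, each $U_g$ is \emph{unitary} and fixes the constants $\C\mathbf 1 \subseteq L^2(X,\mu)$. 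Writing $\phi(y) = a(y)\mathbf 1 + \psi(y)$ with $\psi(y) \in L^2_0(X,\mu)$, the function $a(y) = \int_X F(\cdot,y)\,d\mu$ satisfies $a(gy) = a(y)$, hence is a.e.\ constant by ergodicity of $\Gamma \actson (Y,\nu)$; and $\psi$ inherits the equivariance $\psi(gy) = U_g\psi(y)$ into the mixing part $L^2_0(X,\mu)$. As the $U_g$ are unitary, $y \mapsto \norm{\psi(y)}$ is $\Gamma$-invariant and therefore a.e.\ equal to a constant $c \geq 0$, and the whole statement reduces to showing $c = 0$. Here I would record that mixing of $\Gamma \actson (X,\mu)$ is exactly the assertion that $\langle U_g\xi,\eta\rangle \to 0$ as $g\to\infty$ for all $\xi,\eta \in L^2_0(X,\mu)$.

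Next I would extract a recurrence statement on $(Y,\nu)$ from proper ergodicity; this is where that hypothesis, rather than mere ergodicity, is indispensable. Proper ergodicity forces $\nu$ to be non-atomic: an atom would have a countable, hence null, orbit by hypothesis, yet that orbit is $\Gamma$-invariant of positive measure and so conull by ergodicity, a contradiction. An ergodic non-singular action on a non-atomic probability space is conservative, because a dissipative part would furnish a wandering fundamental domain that ergodicity collapses to a single atom. Conservativity then yields, via Halmos' recurrence theorem, that for every $A \subseteq Y$ with $\nu(A) > 0$ and $\nu$-a.e.\ $y \in A$ the return set $\set{g \in \Gamma : gy \in A}$ is infinite; since $\Gamma$ is countable, I may list a sequence of distinct returning elements $g_n \to \infty$.

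Finally I would play mixing against recurrence. The range $\psi(Y)$ is a separable subset of $L^2_0(X,\mu)$, so for any $\delta > 0$ it is covered by countably many balls $B(v_j,\delta)$, whose preimages partition $Y$ up to a null set; hence some $A \defeq \psi^{-1}(B(v,\delta))$ has $\nu(A) > 0$, and on $A$ one has $\norm{\psi(y) - v} < \delta$ with $\norm{v} \geq c - \delta$. Assume $c > 0$ and take $\delta$ small. Choosing a recurrent $y \in A$ and $g_n \to \infty$ with $g_n y \in A$, the vectors $\psi(y)$ and $\psi(g_n y) = U_{g_n}\psi(y)$ both lie within $\delta$ of $v$, so a direct estimate gives $\RE\langle U_{g_n}\psi(y), \psi(y)\rangle \geq c^2/2$ for all $n$. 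On the other hand $\psi(y) \in L^2_0(X,\mu)$ is a fixed vector while $g_n \to \infty$, so mixing forces $\langle U_{g_n}\psi(y),\psi(y)\rangle \to 0$, a contradiction. Therefore $c = 0$, whence $\phi(y) = a\mathbf 1$ for a.e.\ $y$, i.e.\ $F$ is constant and the product action is ergodic.

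The main obstacle is precisely the non-invariance of $\nu$: the covariance and averaging arguments available in the pmp-versus-pmp setting break down, so the weight of the proof rests on converting proper ergodicity into the recurrence of a positive-measure set along a sequence $g_n \to \infty$, which mixing then annihilates. Verifying conservativity from proper ergodicity and justifying the measurable ball decomposition of $\psi$ are the two technical points that require care.
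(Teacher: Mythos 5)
Your proof is correct. Note that the paper does not prove this statement at all: it is imported verbatim from Schmidt--Walters \cite[Theorem 2.3]{SchmidtWalters82}, so there is no in-paper argument to compare against. Your argument is, in substance, the standard proof of that theorem: fibering an invariant $F\in L^\infty(X\times Y)$ as a map $\phi:Y\to L^2(X,\mu)$ equivariant under the Koopman representation, splitting off the constants, observing that $y\mapsto\norm{\psi(y)}$ is invariant hence a.e.\ constant, and then killing that constant by playing recurrence on $Y$ against decay of matrix coefficients on $L^2_0(X,\mu)$. The two technical points you flag are exactly the right ones and both go through: proper ergodicity rules out atoms (an atom's orbit would be an invariant, countable, hence null set of positive measure), and any wandering set $W$ of positive measure for an ergodic action must be an atom of $\nu$ (for $W'\subseteq W$ non-null, $\Gamma W'$ is invariant hence conull, and wandering gives $\Gamma W'\cap W=W'$, forcing $\nu(W\setminus W')=0$), so conservativity and hence Halmos-type infinite recurrence follow; Pettis measurability of $\phi$ (weak measurability via Fubini plus separability of $L^2(X,\mu)$) justifies the ball decomposition. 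One could shorten the endgame slightly by applying mixing to the single vector $\xi=\eta=\psi(y)$ exactly as you do, so no changes are needed.
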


\subsection{Cocycles and skew actions}\label{Subsec: skew actions}

Let $\Gamma \actson (X,\mu)$ be a nonsingular action of a discrete group $\Gamma$ on a topological group $X$ with Haar measure $\mu$ by continuous automorphisms. A Borel \textit{1-cocycle with target $T$} is a Borel map $\omega: \Gamma \times X \rightarrow T$, where $T$ is a locally compact group, satisfying $\omega(h,gx)\omega(g,x) = \omega(hg,x)$ for a.e. $x \in X$ and all $g,h \in \Gamma$. Any group homomorphism $\pi: \Gamma \rightarrow T$ induces a cocycle $\omega_{\pi}$ given by $\omega_{\pi} (g,x) = \pi(g)$. Two cocycles $\omega$ and $\omega'$ are said to be \textit{cohomologous} if there is a Borel function $f: X \rightarrow T$ satisfying $\omega'(g,x) = f(gx)\omega(g,x)f(x)^{-1}$ for all $g \in \Gamma$ and a.e. $x \in X$. A cocycle $\omega$ which is cohomologous to the trivial cocycle 1 is called a \textit{coboundary}. 

We now recall the notion of the essential range of a cocycle. Let $\omega: \Gamma \times X \rightarrow T$ be a cocycle for an \textit{ergodic} action $\Gamma \actson X$ and let $\overline{T}$ be the one-point compactification of $T$. An element $t \in \overline{T}$ is called an \textit{essential value} of $\omega$ if for every open set $U \subset \overline{T}$ containing $t$ and every non-null Borel subset $E \subseteq X$, there is an element $g \in \Gamma$ and a non-null Borel subset $E_0 \subseteq E$ such that $gE_0 \cup E_0 \subseteq E$ and $\omega(g,E_0) \subseteq U$. The set of essential values is usually denoted by $\overline{E}(\omega)$. By \cite[Lemma 3.3]{SchmidtLectureNotes}, the set $E(\omega) \coloneqq \overline{E}(\omega) \cap T$ is a closed subgroup of $T$, and is called the \textit{essential range of $\omega$}. In this setting, the \textit{skew action} with respect to $\omega$ is the action $\Gamma \actson X \times T$ given by: 
\begin{align*}
    g \cdot (x,t) \coloneqq (gx, \omega(g,x)t)
\end{align*}

It turns out, by \cite[Corollary 5.4]{SchmidtLectureNotes} when $T$ is abelian and by \cite[Theorem 2.2]{Kim06} in general, that the skew action $\Gamma \actson X \times T$ is ergodic if and only if the essential range $E(\omega) = T$. In this situation we often say that $\omega$ has \textit{dense range} in $T$. 

Suppose now that $T$ is abelian. A continuous cocycle $\omega$ is said to be \textit{multiplicative} if we have $\omega(g,xy) = \omega(g,x)\omega(g,y)$ for all $g\in \Gamma$ and $x,y\in X$. If $\omega$ is multiplicative, then the skew action on $X\times T$ is via group automorphisms and acts trivially on $T$. Conversely, suppose that $\beta$ is a continuous action of $\Gamma$ on $X\times T$ that is trivial on $T$. Then, $\alpha, \omega$ defined by
\begin{align*}
    \beta_g(x,e) = (\alpha_g(x), \omega(g,x))\,\quad \forall g\in G,\, x\in X
\end{align*}
satisfy that $\alpha$ is a continuous action of $\Gamma$ on $X$ and $\omega$ is a continuous multiplicative $1$-cocycle for $\alpha$. Further, $\beta$ can be recovered as the skew action.

\subsection{Locally compact groups and Haar measure}

Recall that a locally compact group $G$ comes equipped with a left invariant Haar measure, i.e. a $\sigma$-finite measure $\mu$ on $G$ such that $\mu(gE) = \mu(E)$ for all Borel subsets $E \subseteq G$. The Haar measure is moreover unique up to multiplication by positive scalars. The Haar measure takes finite values on every compact subset of $G$ and is strictly positive for every open subset of $G$. In particular when $G$ is compact, $\mu(G) < \infty$. The \textit{normalized Haar measure} on a compact group $G$ is the unique left invariant Haar measure satisfying $\mu(G) = 1$. On a compact group $G$, one can show that the normalized Haar measure is also right invariant. In general a locally compact group $G$ where the left-invariant Haar measure is also right invariant, is called \textit{unimodular}. For example, discrete countable groups with the counting measure are unimodular. All abelian groups are unimodular: for example $\R^n$ with respect to addition form a topological group which is locally compact and the usual Lebesgue measure $\lambda$ is the left invariant Haar measure. Moreover $\lambda$ is also right invariant and hence $\R^n$ is unimodular. 

In general the difference between the left and the right invariant Haar measures can be quantified as follows. Given a locally compact group $G$ with a left invariant Haar measure $\mu$, there is a continuous group homomorphism $\Delta: G \rightarrow \R^+_*$ such that for all $g \in G$ and Borel subset $E \subset G$ we have: 

\begin{align*}
    \mu(Eg) = \Delta(g^{-1})\mu(E)
\end{align*}

The homomorphism $\Delta$ is usually called the \textit{modular function} on $G$. By uniqueness of the Haar measure up to positive scalars, $\Delta$ is well defined as a function on $G$. For any integrable function $f \in L^1(G)$, we have that: 
\begin{align*}
    \int_{G} f(gh) d\mu(g) = \Delta(h^{-1}) \int_{G} f(g) d\mu(g)
\end{align*}

Suppose $\Gamma$ is a discrete countable group and $N$ is a locally compact abelian group. Suppose that $\Gamma$ acts on $N$ by continuous group automorphisms. Then the semidirect product $G = N \rtimes \Gamma$ is a locally compact group. It can be checked that $G$ is unimodular if and only if the action $\Gamma \actson N$ preserves the Haar measure on $N$.

For example, consider the linear action of a countable subgroup $\Gamma < \GL(n,\R)$ on $\R^n$ and let $G$ be the semidirect product. Then for $g \in \Gamma$ and a Borel subset $E \subset \R^n$, we have that $\lambda(g \cdot E) = |\det(g)| \cdot \lambda(E)$. As a consequence, we have that $\Delta(g,x) = |\det(g)|^{-1}$ for all $g \in \Gamma$ and $x \in \R^n$. In particular, if $\Gamma$ is a subgroup of $\SL(n,\R)$, then the action preserves the Lebesgue measure and the semidirect product $G$ is unimodular.  

In the case when $K$ is a compact group and $\mu$ is the unique normalized Haar measure, any action $\Gamma \actson K$ by continuous automorphisms is automatically probability measure preserving (pmp). This is because for any $g \in \Gamma$, the pushforward $g_* \mu$ is once again a normalized left invariant Haar measure, and by uniqueness it must be equal to $\mu$. In this case the semidirect product is always unimodular. 

\subsection{Pontryagin duality for actions on abelian groups}

Let $N$ be a locally compact abelian group, then the group of continuous homomorphisms from $N$ to the torus $\T$ equipped with the topology of uniform convergence on compact subsets is called the Pontryagin dual of $N$ and is denoted by $\widehat{N}$. It can be checked that $N$ is again a locally compact abelian group. The Pontryagin duality theorem states that there is a natural isomorphism between $N$ and the dual of $\widehat{N}$. It turns out that $N$ is discrete if and only if $\widehat{N}$ is compact. For example, the integer groups $\Z^n$ and the compact torus groups $\T^n$ are Pontryagin duals of each other. The dual of $\R^n$ is $(\R^*_+)^n$, which can be identified again with $\R^n$. 

Now consider an action $\Gamma \actson N$ of a discrete countable group on a locally compact abelian group by continuous automorphisms. This induces an action $\Gamma \actson \widehat{N}$ by $(g \cdot \chi)(n) = \chi(g^{-1}n)$. This is called the \textit{dual action} of $\Gamma \actson N$ and will be essential in the rest of this article. It can be checked that $\Gamma$ preserves the Haar measure on $N$ if and only if the dual $\Gamma$-action also preserves the Haar measure on $\widehat{N}$. For example if $\Gamma \actson K$ for a compact abelian group $K$, the action is automatically Haar-measure preserving. The dual action in this case is simply an action on a discrete countable group which trivially preserves the Haar measure. For a countable subgroup $\Gamma < \SL(n,\R)$, the dual of the linear action $\Gamma \actson \widehat{\R^n}$ preserves the Haar measure on $\R^n$. 

For an action of a countable group $\Gamma$ on an abelian group $N$, the group von Neumann algebra of the semidirect product $L(N \rtimes \Gamma)$ turns out to be precisely the group measure space von Neumann algebra corresponding to the dual action. We record this result here (see \cite[Proposition 14.D.1]{Bekka-delaharpe2020}), which will be used extensively throughout this article. 

\begin{thm}
    \label{Thm: von neumann algebra of sd product is crossed product of dual action}
    Let $\Gamma$ be a discrete group acting on a locally compact abelian group $N$ by continuous automorphisms. Let $G = N \rtimes \Gamma$ denote the semidirect product. Then the von Neumann algebra $L(G)$ is unitarily equivalent to the von Neumann algebra $L^{\infty}(\widehat{N}) \rtimes \Gamma$.
\end{thm}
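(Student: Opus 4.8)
The plan is to prove that $L(N \rtimes \Gamma) \cong L^\infty(\widehat{N}) \rtimes \Gamma$ by constructing an explicit unitary implementing the equivalence, working on the Hilbert space $L^2(G)$ where $G = N \rtimes \Gamma$. Since $\Gamma$ is discrete, the Haar measure on $G$ disintegrates as a product of the Haar measure on $N$ and counting measure on $\Gamma$, so $L^2(G) \cong L^2(N) \otimes \ell^2(\Gamma)$. The von Neumann algebra $L(G)$ is generated by the left regular representation $\lambda_G$, and I would first write down how $\lambda_G$ acts when restricted to the copies of $N$ and of $\Gamma$ sitting inside $G$: the elements $\lambda_G(n)$ for $n \in N$ and $\lambda_G(g)$ for $g \in \Gamma$ together generate $L(G)$ (since these generate a dense subgroup of $G$ and one checks the group algebra relations $\lambda_G(g)\lambda_G(n)\lambda_G(g)^{-1} = \lambda_G(g \cdot n)$ hold).

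First I would apply the Fourier transform (Plancherel) on the abelian factor $N$, giving a unitary $\mathcal{F}: L^2(N) \to L^2(\widehat{N})$, and tensor with the identity on $\ell^2(\Gamma)$ to obtain a unitary $U = \mathcal{F} \otimes 1$ from $L^2(G)$ to $L^2(\widehat{N}) \otimes \ell^2(\Gamma)$. Under this transform, the left convolution operators $\lambda_G(n)$ for $n \in N$ become multiplication operators: a convolution by an element of $N$ is carried by Plancherel to multiplication by the corresponding character evaluation on $\widehat{N}$, so the image of the abelian part of $L(G)$ lands inside $L^\infty(\widehat{N}) \otimes 1$. One then verifies that this correspondence is onto $L^\infty(\widehat{N})$, using that the characters $\{n : n \in N\}$ generate $L^\infty(\widehat{N})$ as a von Neumann algebra (this is the statement that the Fourier algebra is weak-$*$ dense, a standard consequence of Pontryagin duality and Stone--Weierstrass-type density). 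For the $\Gamma$-part, I would check that $U \lambda_G(g) U^*$ acts on $L^\infty(\widehat{N}) \otimes \ell^2(\Gamma)$ exactly as the canonical unitaries implementing the dual action $\widehat{\alpha}$ in the crossed product, i.e. $\big(U\lambda_G(g)U^*\big)(\xi)(\chi) = \xi(g^{-1}\cdot \chi)$ appropriately composed with the translation on $\ell^2(\Gamma)$; this amounts to tracking how conjugation by $g$ on $N$ dualizes to the action $(g \cdot \chi)(n) = \chi(g^{-1}n)$ defined in the preliminaries.

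The crux is then the covariance relation: I must confirm that the multiplication operators (the image of $L^\infty(\widehat{N})$) and the unitaries (the image of $\lambda_G(g)$) satisfy precisely the crossed-product relation $u_g m_f u_g^* = m_{f \circ \widehat{\alpha}_g^{-1}}$, so that together they generate a copy of $L^\infty(\widehat{N}) \rtimes_{\widehat{\alpha}} \Gamma$ and nothing more. This follows from the group-law relation in $G$ noted above, transported through the Fourier transform; the computation reduces to the elementary identity that pushing forward a character under the dual action is the Fourier-dual of conjugation on $N$. Finally, since $U$ is a unitary and $U L(G) U^*$ is generated by exactly the generators of $L^\infty(\widehat{N}) \rtimes \Gamma$, the von Neumann algebras they generate coincide, giving the unitary equivalence.

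The main obstacle I anticipate is not any single deep step but rather the careful bookkeeping needed to verify that the non-singular (rather than merely pmp) nature of the dual action is correctly accounted for: when $\Gamma$ does not preserve the Haar measure on $N$, the Fourier transform and the regular representation must be normalized by Radon--Nikodym/modular factors (powers of $|\det|$ in the $\SL$-motivated examples, more generally the modular function computed in the preliminaries), and these factors must be inserted so that $U$ is genuinely unitary and the covariance relation holds on the nose. Making the unitaries $U\lambda_G(g)U^*$ isometric in the non-measure-preserving case, and checking these correction factors are exactly the ones appearing in the standard definition of the crossed product by a non-singular action, is where the argument needs the most care.
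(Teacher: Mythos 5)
Your proposal is correct and is exactly the standard Fourier--Plancherel argument: the paper itself gives no proof of this theorem, citing \cite[Proposition 14.D.1]{Bekka-delaharpe2020} (going back to von Neumann and Sutherland), and the cited proof proceeds precisely as you describe, via $L^2(G)\cong L^2(N)\otimes\ell^2(\Gamma)$, the Plancherel unitary $\mathcal{F}\otimes 1$, weak-$*$ density of the characters in $L^{\infty}(\widehat{N})$, and the covariance relation $\lambda_G(g)\lambda_G(n)\lambda_G(g)^{-1}=\lambda_G(g\cdot n)$. You also correctly isolate the only delicate point, namely the modular/Radon--Nikodym normalization needed when the action is non-singular rather than measure-preserving.
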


\subsection{Structure of locally compact abelian groups}

In this section we briefly review some structural results about locally compact abelian groups. As in the rest of this article, unless otherwise stated every locally compact group is second countable. We begin by mentioning the three main classes of examples that we shall deal with in this article. 

\begin{exmp}
\begin{enumerate}
    \item The Euclidean space $\R^n$ with respect to addition is an abelian locally compact group. The Lebesgue measure, that we shall usually denote by $\lambda$ is the invariant Haar measure up to a scalar. It is easy to see that $\R^n$ is connected and non-compact. 

    \item The free abelian group $\Z^n$ with respect to the discrete topology is a locally compact group. The Haar measure on $\Z^n$ is the usual counting measure and $\Z^n$ is trivially totally disconnected. 

    \item The torus group $\T^n = \R^n / \Z^n$ is an example of a locally compact group which is compact. By compactness, the Haar measure can be normalized to a probability measure, and is given by taking the product of the usual Lebesgue measure. Once again $\T^n$ is connected. 
\end{enumerate}
\end{exmp}

It turns out that even if these examples seem special, they are in practice fairly general in our setting. Recall that two locally compact groups $G$ and $H$ are said to be \textit{locally isomorphic} if there exist neighbourhoods $U \subset G$ and $V \subset H$ of the identity and a homeomorphism $\phi: U \rightarrow V$ such that for all $x,y \in U$, the following happens: if $xy \in U$ then $\phi(x)\phi(y) = \phi(xy)$ and if $x^{-1} \in U$ then $\phi(x)^{-1} = \phi(x^{-1})$. 

\begin{defn}
    A locally compact abelian group $G$ is said to be of \textit{Lie type} if it is locally isomorphic to $\R^n$ for some non-negative integer $n$. 
\end{defn}

Discrete abelian groups are of Lie type because they are locally isomorphic to the trivial group. The groups $\R^n$ and $\T^n$ are locally isomorphic to $\R^n$ and are hence of Lie type. A product of groups of Lie type is also of Lie type. These facts are quite straightforward and we refer the interested reader to \cite[Example 4.2.3]{DeitmarEchterhff}.

Before stating the main results from the structure theory of locally compact abelian groups, recall that a locally compact group $G$ is called \textit{compactly generated} if there is a compact subset $K$ that generates $G$. If $G$ is discrete countable, this is the same as $G$ being finitely generated. The following `structure theorem' is a collection of results that can be found in many standard texts, for example in \cite[Corollary 7.54 and Corollary 7.56]{HofmannMorris23} and \cite[Chapter 7]{DeitmarEchterhff}.

\begin{thm}
    \label{Thm: structure of abelian locally compact groups}
    Let $N$ be a locally compact abelian group. Then the following are true: 
    \begin{enumerate}
        \item For each neighbourhood $U$ of the identity, there is a compact subgroup $K \subset U$ such that $N/K \cong \R^m \times \T^n \times D$, where $m,n$ are non-negative integers and $D$ is a countable discrete abelian group. If $N$ is of Lie type, then $N \cong \R^m \times \T^n \times D$.   
        
        \item There is a unique maximal compact connected subgroup $K_0 < N$.
        
        \item If $N$ has no non-trivial compact subgroups, then $N \cong \R^m \times D$ for a discrete torsion-free abelian group $D$.
        
        \item If $N$ is compactly generated, then $N \cong \R^m \times K \times \Z^n$ for a compact abelian group $K$.  

        \item If $N$ does not have a non-trivial compact connected subgroups then $N \cong \R^n \times L$ where $L$ has a compact open subgroup. 
    \end{enumerate}
\end{thm}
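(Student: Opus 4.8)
The plan is to reduce the entire statement to a single substantial input, the \emph{principal structure theorem} for locally compact abelian (LCA) groups: every such $N$ is topologically isomorphic to $\R^n \times H$ where $H$ contains a compact open subgroup $C$ (see \cite[Chapter 7]{DeitmarEchterhff}). This is the one genuinely deep ingredient — it is essentially interchangeable with the compactly generated structure theorem, since every LCA group has an open compactly generated subgroup (the one generated by a compact neighbourhood of the identity) — and I would cite it rather than reprove it. Granting it, the five clauses become bookkeeping driven by Pontryagin duality. I first record two consequences used throughout: since $\R^n$ has no nontrivial compact subgroup and $C$ is open in $H$, the identity components satisfy $H^\circ = C^\circ$ and $N^\circ = \R^n \times C^\circ$, with $C^\circ$ compact and connected.

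Items (5), (3) and (2) are then immediate. Clause (5) is the principal structure theorem verbatim (the compact-connected hypothesis is not even needed for this formulation). For (3), if $N$ has no nontrivial compact subgroup then the compact group $C$ is trivial, so $H$ is discrete and $N \cong \R^n \times H$; moreover $H$ is torsion-free, since a torsion element would generate a finite, hence compact, nontrivial subgroup. For (2), set $K_0 := C^\circ = H^\circ$, which is compact and connected. Any compact connected subgroup $M \le N$ projects trivially to the $\R^n$ factor, so $M \le H$, and being connected $M \le H^\circ = K_0$; thus $K_0$ is the unique maximal compact connected subgroup.

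Clause (4) is the compactly generated structure theorem, and I record how it drops out of the principal one. If $N$ is compactly generated then so is $H \cong N/\R^n$, whence the discrete quotient $H/C$ is finitely generated abelian, say $H/C \cong \Z^b \times F$ with $F$ finite. Let $C'$ be the preimage in $H$ of $F$: as a finite union of cosets of the compact open $C$ it is again compact and open, and $H/C' \cong \Z^b$. Since $\Z^b$ is free, the surjection $H \to \Z^b$ admits a homomorphic section, automatically continuous as $\Z^b$ is discrete, so the induced continuous bijection $C' \times \Z^b \to H$ is a topological isomorphism by the open mapping theorem (valid since all groups here are second countable). Hence $N \cong \R^n \times C' \times \Z^b$ with $C'$ compact, which is (4) on setting $K := C'$.

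The remaining clause (1) is where I expect the only nontrivial work among the derivations. Fix a neighbourhood $U$ of the identity; I must produce a compact subgroup $K \subseteq U$ with $N/K$ of Lie type. Working inside $C$ and dualizing, $\widehat{C}$ is discrete and is the directed union of its finitely generated subgroups $\Lambda$; by annihilator duality the compact subgroups $\Lambda^\perp \le C$ form a neighbourhood basis of the identity, with $\widehat{C/\Lambda^\perp} \cong \Lambda$ finitely generated, so each $C/\Lambda^\perp$ is a compact Lie group (isomorphic to $\T^k \times (\text{finite})$). Choosing $\Lambda$ with $K := \Lambda^\perp \subseteq U$, the quotient $N/K = \R^n \times H/K$ has the compact open Lie subgroup $C/K$, hence has no small subgroups and is a Lie group, i.e. of Lie type; the principal structure theorem applied to $N/K$ then yields $N/K \cong \R^m \times \T^{n'} \times D$. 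Finally, if $N$ is itself of Lie type it has no small subgroups, so $K=\{e\}$ is admissible and the decomposition holds for $N$ directly. The substantive obstacle is thus confined entirely to the principal structure theorem, which I cite; among the steps I actually carry out, (1) is the delicate one, its engine being the direct-limit description of $\widehat{C}$ by finitely generated subgroups furnished by duality.
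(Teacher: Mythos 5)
The paper does not prove this theorem at all: it is presented as ``a collection of results that can be found in many standard texts'' and is justified only by the citations to Hofmann--Morris and Deitmar--Echterhoff. Your proposal therefore does strictly more than the paper, and its architecture is sound: reducing everything to the principal structure theorem $N \cong \R^n \times H$ with $H$ containing a compact open subgroup $C$, and then deriving (2)--(5) by the identifications $H^\circ = C^\circ$, triviality of $C$ in the no-compact-subgroups case, the splitting of $H \to H/C' \cong \Z^b$ via freeness plus the open mapping theorem, and the neighbourhood basis of compact subgroups $\Lambda^\perp$ obtained by dualizing the directed union $\widehat{C} = \bigcup \Lambda$. All of these steps are correct. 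The one place you should slow down is the final move in clause (1): knowing that $N/K$ is an abelian Lie group (equivalently, has no small subgroups) and applying the principal structure theorem gives $N/K \cong \R^m \times H'$ with $H'$ having a compact open subgroup $C' \cong \T^{n'} \times F'$, which is not yet the asserted form $\R^m \times \T^{n'} \times D$; you still need to split the torus off as a direct topological factor. This follows because $(N/K)^\circ \cong \R^a \times \T^b$ is divisible, hence injective as an abstract abelian group, so the extension $0 \to (N/K)^\circ \to N/K \to D \to 0$ admits a retraction, and that retraction is automatically continuous since it restricts to the identity on the open subgroup $(N/K)^\circ$. The same remark applies to your treatment of the ``$N$ of Lie type'' case. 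With that step made explicit (and the routine observation that second countability forces $D$ to be countable), your derivation is complete and is a legitimate self-contained substitute for the paper's bare citation.
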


The point in the previous theorem about $N$ containing no compact subgroups is quite natural in our setting due to the following lemma. 

\begin{lem}
    \label{Lemma: Compact normal subgroup implies no factoriality}
    Let $G$ be a locally compact group that has a non-trivial compact normal subgroup $K$. Then $L(G)$ is not a factor. 
\end{lem}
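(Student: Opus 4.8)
The plan is to produce a nontrivial central projection in $L(G)$ by averaging the left regular representation over the compact normal subgroup $K$. Write $\lambda \colon G \to \mathcal{U}(L^2(G))$ for the left regular representation with respect to a left Haar measure on $G$, so that $L(G) = \{\lambda_g : g \in G\}''$, and let $dk$ denote the normalized Haar measure on the compact group $K$. First I would set
\[
  p \;=\; \int_K \lambda_k \, dk,
\]
interpreting the integral weakly. Since $k \mapsto \lambda_k$ is strongly continuous and $K$ is compact, the integral converges in the weak operator topology; as each $\lambda_k$ lies in $L(G)$ and a von Neumann algebra is a weakly closed convex set, we obtain $p \in L(G)$.

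Next I would verify that $p$ is a projection and that it is central. Self-adjointness $p^* = p$ follows from $\lambda_k^* = \lambda_{k^{-1}}$ together with invariance of the Haar measure of a compact group under inversion, and idempotency $p^2 = p$ follows by Fubini and the substitution $(k,k') \mapsto kk'$ using left invariance of $dk$, giving $\int_K\int_K \lambda_{kk'}\,dk\,dk' = \int_K \lambda_{k'}\,dk' = p$. The heart of the argument is centrality, and this is exactly where normality of $K$ enters: for each $g \in G$ the conjugation $k \mapsto gkg^{-1}$ is a topological automorphism of the compact group $K$, hence preserves its normalized Haar measure by uniqueness, so that
\[
  \lambda_g\, p\, \lambda_g^{-1} \;=\; \int_K \lambda_{gkg^{-1}}\,dk \;=\; \int_K \lambda_k\,dk \;=\; p.
\]
Thus $p$ commutes with every $\lambda_g$, and since $L(G)' = \{\lambda_g : g \in G\}'$ by the double commutant theorem, we conclude $p \in L(G)' \cap L(G) = Z(L(G))$.

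Finally I would rule out the trivial cases $p = 0$ and $p = 1$, which is what makes $Z(L(G))$ strictly larger than $\C \cdot 1$. The range of $p$ is precisely the subspace of $\xi \in L^2(G)$ invariant under left translation by $K$. For $p \neq 0$, averaging a nonnegative bump function $\phi \in C_c(G)$ yields $\tilde\phi(x) = \int_K \phi(k^{-1}x)\,dk$, which is continuous, compactly supported (because $K$ is compact), left $K$-invariant, and strictly positive somewhere, hence a nonzero vector in the range of $p$. For $p \neq 1$, I would fix $k_0 \in K$ with $k_0 \neq e$ and choose $\psi \in C_c(G)$ supported on a neighbourhood $V$ of $e$ small enough that $k_0 V \cap V = \emptyset$; then $\lambda_{k_0}\psi$ and $\psi$ have disjoint supports, so $\psi$ is not $K$-invariant and $p\psi \neq \psi$. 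Hence $p$ is a central projection different from $0$ and $1$, so $L(G)$ is not a factor.

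I expect the main obstacle to be the careful treatment of the non-discrete setting: justifying rigorously that the operator-valued integral defines a genuine element of $L(G)$, and establishing the nontriviality $0 \neq p \neq 1$ by the analytic argument above rather than via the counting/trace shortcut that is available only when $G$ is discrete. By contrast, centrality is the clean structural step, and it is there that the normality hypothesis on $K$ is used in an essential way.
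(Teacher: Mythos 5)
Your proof is correct and takes essentially the same route as the paper: both construct the averaged operator $p=\int_K \lambda_k\,dk$, show it is a projection lying in $L(G)$, and use normality of $K$ (conjugation preserves the normalized Haar measure of $K$) to conclude it is central. The only divergence is in the nontriviality step, where the paper invokes $pL(G)\cong L(G/K)$ to get $p\neq 0$ and the identity $\lambda_{k_0}p=p$ for $k_0\neq e$ to get $p\neq 1$, while you argue both points directly with compactly supported bump functions; both verifications are valid.
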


\begin{proof}
    Indeed, for every compact subgroup $L$ of $G$, let $\mu_L$ be the normalized Haar measure on $L$ and extend $\mu_L$ to $G$ by defining it to be $0$ outside $L$. Then, the element
        \begin{align*}
            e_L\defeq \int \lambda_G(s) d\mu_L(s) \in L(G)
        \end{align*}
        is a projection. Further, $e_L$ is central if and only if $L$ is normal. In particular, $e_K\in L(G)$ is a central projection. Further, recall that $e_K L(G)\cong L(G/K)$, which in particular means that $e_K\neq 0$. On the other hand, if $s\in K\setminus \{e\}$, then $\lambda_s e_K = e_K$, which means that $e_K\neq 1$. Hence, $L(G)$ is not a factor.
\end{proof}

As a result of Theorem \ref{Thm: structure of abelian locally compact groups} and Lemma \ref{Lemma: Compact normal subgroup implies no factoriality}, there are some restrictions on the group, that we state now. 

\begin{cor}
\label{Cor: implications of factoriality}
    Let $\Gamma$ be a discrete countable group acting on a locally compact abelian group $N$. If $L(N \rtimes \Gamma)$ is a factor then $N \cong \R^m \times L$ where $L$ is totally disconnected. In particular if $N$ is of Lie type, then $L$ is discrete, and if $N$ is compactly generated, then $L = \Z^n$.
\end{cor}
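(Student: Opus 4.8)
The plan is to assemble the corollary from the two results already in hand: the structure theorem (Theorem \ref{Thm: structure of abelian locally compact groups}) supplies the decompositions, and Lemma \ref{Lemma: Compact normal subgroup implies no factoriality} rules out compact normal subgroups. The one genuinely useful observation is that the relevant maximal compact subgroups of $N$ are \emph{canonical}, so they are automatically preserved by the $\Gamma$-action and hence become normal in the semidirect product $G = N \rtimes \Gamma$.

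First I would assume $L(G)$ is a factor and take the unique maximal compact connected subgroup $K_0 < N$ provided by part (2) of Theorem \ref{Thm: structure of abelian locally compact groups}. Since each $g \in \Gamma$ acts on $N$ by a topological automorphism, it carries compact connected subgroups to compact connected subgroups; by uniqueness this forces $g \cdot K_0 = K_0$ for all $g$, so $K_0$ is $\Gamma$-invariant. As $N$ is abelian, $K_0$ is normal in $N$, and a direct computation in the semidirect product shows that conjugation by any $(n,g) \in G$ sends $(k,e)$ to $(g \cdot k, e) \in K_0 \times \{e\}$, so $K_0$ is a compact normal subgroup of $G$. By Lemma \ref{Lemma: Compact normal subgroup implies no factoriality}, factoriality forces $K_0 = \{e\}$; that is, $N$ has no nontrivial compact connected subgroup.

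Next I would invoke part (5) to write $N \cong \R^m \times L$, where $L$ has a compact open subgroup $U$. The identity component $L^\circ$ is closed; since $U$, being open, is also closed, the set $L^\circ \cap U$ is relatively clopen in the connected set $L^\circ$ and contains $e$, hence equals $L^\circ$, so $L^\circ \subseteq U$. Being a closed subset of the compact group $U$, $L^\circ$ is a compact connected subgroup of $N$, so by the previous step $L^\circ = \{e\}$ and $L$ is totally disconnected. This proves the first assertion.

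Finally I would read off the two special cases from the remaining parts of the structure theorem. If $N$ is of Lie type, part (1) gives $N \cong \R^m \times \T^n \times D$ with $D$ discrete; since $\T^n$ is compact and connected, the absence of nontrivial compact connected subgroups forces $n = 0$, whence $L \cong D$ is discrete. If $N$ is compactly generated, part (4) gives $N \cong \R^m \times K \times \Z^n$ with $K$ compact; here $K$ is the \emph{unique maximal compact} subgroup of $N$, because $\R^m$ and $\Z^n$ contain no nontrivial compact subgroup and so every compact subgroup of $N$ lands in $K$. Arguing exactly as for $K_0$, the subgroup $K$ is $\Gamma$-invariant and normal in $G$, so Lemma \ref{Lemma: Compact normal subgroup implies no factoriality} forces $K = \{e\}$ and $L = \Z^n$. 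The only step requiring any care is the invariance-plus-normality argument showing that these canonical compact subgroups descend to normal subgroups of $G$; once that is in place the rest is bookkeeping with the structure theorem.
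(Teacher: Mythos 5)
Your proof is correct and follows essentially the same route as the paper: use the canonicity of the maximal compact (connected) subgroup to make it normal in $G = N \rtimes \Gamma$, kill it with Lemma \ref{Lemma: Compact normal subgroup implies no factoriality}, and then read off the conclusion from parts (5), (1) and (4) of Theorem \ref{Thm: structure of abelian locally compact groups}. You are somewhat more explicit than the paper on the two ``in particular'' clauses (notably in identifying $K$ as the unique maximal compact subgroup in the compactly generated case, which is exactly the detail the paper compresses into ``running the same argument as above''), but the argument is the same.
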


\begin{proof}
    Consider the unique maximal compact connected subgroup $K_{\text{max}} < N$ as in point 2 of Theorem \ref{Thm: structure of abelian locally compact groups}. Then $\alpha_g(K_{\text{max}}) = K_{\text{max}}$ for all $g \in \Gamma$. Therefore $K_{\text{max}}$ is a compact normal subgroup of $N \rtimes \Gamma$. Thus by Lemma \ref{Lemma: Compact normal subgroup implies no factoriality}, we have that $K_{\text{max}}$ is trivial.

    Now, we have $N\cong \R^n\times L$ for some $n\geqslant 0$ and for some $L$ such that $L$ has a compact open subgroup $K$ by Theorem \ref{Thm: structure of abelian locally compact groups} (c.f. \cite[Theorem 4.2.1]{DeitmarEchterhff}). Let $K^{\circ}$ be the connected component of identity in $K$. Then, $K^{\circ}$ is a compact and connected subgroup of $N$ and hence $K^{\circ}$ is trivial, which implies that $K$ is totally disconnected. Now, let $L^{\circ}$ be the connected component of identity in $L$. Then, $K\cap L^{\circ}$ is nonempty, open and closed in $L^{\circ}$, which implies that $K\cap L^{\circ} = L^{\circ}$, i.e. $L^{\circ}\subseteq K$. This means that $L^{\circ}$ is trivial since $K$ is totally disconnected, which in turn implies that $L$ is totally disconnected.

    Finally, when $N$ is compactly generated, then by Theorem \ref{Thm: structure of abelian locally compact groups}, we have that $N \cong \R^m \times K \times \Z^n$ and running the same argument as above we have that $L = \Z^n$. 
\end{proof}

Clearly when $N$ is compactly generated such that $L(N\rtimes \Gamma)$ is a factor, we have that $N$ is of Lie type. We shall hence deal in the rest of this paper with actions of a discrete countable group $\Gamma$ on a locally compact abelian group of the form $N \cong \R^m \times D$ where $D$ is a discrete countable abelian group.

\section{Actions on discrete abelian groups}
\label{Sec: actions on discrete abelian groups}

Suppose we have an action of a discrete countable group $\Gamma$ on a discrete (countable) abelian group $D$ and let $G = D \rtimes \Gamma$. It is well known that factoriality of $L(G)$ is equivalent to the group $G$ having the icc property, i.e. every non-trivial conjugacy class of $G$ being infinite. In this section, we give an alternate formulation of this condition in terms of the action. 

Let $\widehat{D}$ be the Pontryagin dual of $D$ and recall that $\widehat{D}$ is compact. Recall that the dual action $\Gamma \actson \widehat{D}$ is automatically probability measure preserving (pmp). By Theorem \ref{Thm: von neumann algebra of sd product is crossed product of dual action}, $L(G) = L(D \rtimes \Gamma)$ is a factor if and only if the crossed product $L^\infty(\widehat{D}) \rtimes \Gamma$ of the pmp action $\Gamma \actson \widehat{D}$ is a factor. 

In \cite[Proposition 14.D.6]{Bekka-delaharpe2020}, the authors give equivalent conditions on $\Gamma \actson D$ for the dual action $\Gamma \actson \widehat{D}$ to be free and ergodic. They remark in \cite[Remark 14.D.3]{Bekka-delaharpe2020} that even though ergodicity is necessary for factoriality, freeness is not. 
Recall the following notion from \cite[Chapter 14]{Bekka-delaharpe2020}. 

\begin{defn}
    Let $\Gamma \actson N$ be an action on a locally compact abelian group $N$. For $g \in \Gamma$, let $N_g = \{(g \cdot n)n^{-1} \; | \; n \in N\}$. Notice that since $N$ is abelian, we have that $N_g$ is a subgroup of $N$. We also define the \textit{annihilator of $N_g$} as the subgroup $N_g^{\perp} = \{\chi \in \widehat{N} \; | \; \chi(n) = 1 \text{ for all } n \in N_g\}$ of $\widehat{N}$.   
\end{defn}

The annihilator of $N_g$ turns out to be precisely the set of fixed points of $g$ in the dual action. Indeed as in \cite[Proposition 14.D.6]{Bekka-delaharpe2020} we have: 

\begin{prop}\textup{\cite[Proposition 14.D.6]{Bekka-delaharpe2020}}
    \label{Prop: conjugacy class condition discrete case}
        Let $\Gamma \actson N$ be an action on a locally compact abelian group, and consider the dual action $\Gamma \actson \widehat{N}$. Then for a non-trivial element $g \in \Gamma$, the set of fixed points $\Fix_{\widehat{N}}(g)$  is equal to $N_{g}^{\perp}$. If furthermore $N$ is discrete, then $N_{g}^{\perp}$ is non-null if and only if $N_{g}$ is finite.
\end{prop}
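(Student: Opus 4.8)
The plan is to treat the two assertions separately, since the first is a purely algebraic unwinding of the definitions while the second relies on the compactness of $\widehat{N}$.

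For the identity $\Fix_{\widehat{N}}(g) = N_g^{\perp}$, I would compute directly with the dual action. By definition $\chi \in \Fix_{\widehat{N}}(g)$ means $g \cdot \chi = \chi$, i.e. $\chi(g^{-1}n) = \chi(n)$ for every $n \in N$. Since $n \mapsto g \cdot n$ is a bijection of $N$, I may replace $n$ by $g \cdot n$ and rewrite this as $\chi(n) = \chi(g \cdot n)$ for all $n$. As $\chi$ is a homomorphism into $\T$ and $N$ is abelian, this is equivalent to $\chi\big((g \cdot n)n^{-1}\big) = 1$ for all $n$, which is exactly the statement that $\chi$ kills the generating set $\{(g\cdot n)n^{-1} : n \in N\}$ of $N_g$, i.e. $\chi \in N_g^{\perp}$. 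This argument requires no hypothesis on $N$ beyond local compactness and commutativity.

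For the second assertion I assume $N$ discrete, so that $\widehat{N}$ is compact and carries a normalized Haar measure, and $N_g$ is a (closed) subgroup of $N$. The first step is the standard dichotomy for closed subgroups of a compact group: a closed subgroup $H \leq \widehat{N}$ is non-null if and only if it is open, if and only if it has finite index. Indeed, if $H$ is open then its cosets form an open cover of the compact group $\widehat{N}$, forcing finite index and hence $\mu(H) = [\widehat{N}:H]^{-1} > 0$; conversely, if $\mu(H) > 0$ then a Steinhaus-type argument shows that $H$ contains an identity neighbourhood and is therefore open. Thus it remains to decide when $N_g^{\perp}$ has finite index.

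Here I would invoke Pontryagin duality. The restriction map $\widehat{N} \to \widehat{N_g}$, $\chi \mapsto \chi|_{N_g}$, is a continuous surjective homomorphism (surjectivity being the extension property of characters of a closed subgroup) whose kernel is precisely $N_g^{\perp}$, so it induces a topological isomorphism $\widehat{N}/N_g^{\perp} \cong \widehat{N_g}$. Consequently $[\widehat{N} : N_g^{\perp}] = |\widehat{N_g}|$, and since $N_g$ is discrete this cardinality is finite exactly when $N_g$ is finite (a finite discrete group and its dual have the same order, while an infinite discrete group has an infinite compact dual). Combining this with the dichotomy above yields $N_g^{\perp}$ non-null $\iff N_g^{\perp}$ of finite index $\iff N_g$ finite. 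I expect the only genuinely delicate point to be the measure-theoretic dichotomy for closed subgroups—specifically the Steinhaus-type step showing that positive measure forces openness—whereas the duality identification and the change of variables are routine.
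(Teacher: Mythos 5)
Your argument is correct: the identification $\Fix_{\widehat{N}}(g)=N_g^{\perp}$ is exactly the change of variables $n\mapsto g\cdot n$ combined with the homomorphism property of $\chi$ (note that $\{(g\cdot n)n^{-1}\}$ is not merely a generating set of $N_g$ but is $N_g$ itself, since $N$ is abelian), and the second assertion follows as you say from the Steinhaus dichotomy for closed subgroups of a compact group together with $\widehat{N}/N_g^{\perp}\cong\widehat{N_g}$. The paper itself gives no proof of this statement --- it is quoted directly from Bekka--de la Harpe, Proposition 14.D.6 --- so there is nothing to compare against; your write-up is the standard argument and would serve as a self-contained proof.
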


Ergodicity of the dual action also has an interesting description in terms of orbits of the original action. The following result is well known and also appears in \cite[Proposition 14.D.6]{Bekka-delaharpe2020}.

\begin{prop}
\label{Prop: ergodicity iff infinite orbits}
    Let $\Gamma \actson D$ be an action on a discrete abelian group $D$ and consider the dual action $\Gamma \actson \widehat{D}$. Then the following are equivalent: 
    \begin{enumerate}
        \item $\Gamma \actson \widehat{D}$ is ergodic 
        \item The $\Gamma$-orbit $\{hn \;|\; h \in \Gamma\}$ of every non-trivial $n \in D$ is infinite.
    \end{enumerate}    
\end{prop}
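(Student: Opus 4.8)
The plan is to prove the equivalence of ergodicity of the dual action $\Gamma \actson \widehat{D}$ with the condition that every non-trivial $\Gamma$-orbit in $D$ is infinite, by exploiting the duality between $D$ and $\widehat{D}$ together with Proposition \ref{Prop: conjugacy class condition discrete case}. The cleanest route is to show the contrapositive of each direction: the dual action fails to be ergodic precisely when some non-trivial element of $D$ has a finite orbit.

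First I would handle the direction $(2) \Rightarrow (1)$, or rather its contrapositive. Suppose $\Gamma \actson \widehat{D}$ is \emph{not} ergodic, so there is a $\Gamma$-invariant Borel set $E \subseteq \widehat{D}$ with $0 < \mu(E) < 1$, where $\mu$ is the Haar (probability) measure on the compact group $\widehat{D}$. The natural tool here is Fourier/Pontryagin analysis: the characters of $\widehat{D}$ are exactly the elements of $D$ (by Pontryagin duality $\widehat{\widehat{D}} \cong D$), so I would expand the indicator $\mathbf{1}_E \in L^2(\widehat{D},\mu)$ in the orthonormal basis given by $D$, writing $\mathbf{1}_E = \sum_{n \in D} c_n \, n$ with Fourier coefficients $c_n = \langle \mathbf{1}_E, n\rangle$. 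The $\Gamma$-invariance of $E$ translates into the statement that the coefficient sequence $(c_n)_{n\in D}$ is constant along $\Gamma$-orbits, i.e. $c_{hn} = c_n$ for all $h \in \Gamma$. Since $\mathbf{1}_E \in L^2$, the sequence $(c_n)$ is square-summable; but a non-zero coefficient that is constant on an infinite orbit would contribute infinitely many equal non-zero terms, violating square-summability. Hence $c_n = 0$ for every $n$ lying in an infinite orbit. If every non-trivial orbit were infinite, only $c_0$ (the coefficient at the identity of $D$) could survive, forcing $\mathbf{1}_E$ to be constant a.e., contradicting $0 < \mu(E) < 1$. This shows that non-ergodicity forces the existence of a non-trivial $n$ with finite orbit, giving $(2) \Rightarrow (1)$.

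For the converse $(1) \Rightarrow (2)$, again via contraposition, I would assume some non-trivial $n_0 \in D$ has a finite $\Gamma$-orbit $O = \{hn_0 : h \in \Gamma\}$ and produce a non-constant $\Gamma$-invariant function on $\widehat{D}$, thereby defeating ergodicity. The natural candidate is $f = \sum_{n \in O} n \in L^2(\widehat{D})$, which is manifestly $\Gamma$-invariant (permuting the finite orbit) and non-constant (it is orthogonal to the constant $0 \in D$, since $n_0 \neq 0$). This directly contradicts the characterization of ergodicity as $L^\infty(\widehat{D})^\Gamma = \mathbb{C}\cdot 1$; one only needs to note that an $L^2$-invariant non-constant function yields, after truncation or by spectral considerations, a non-constant invariant $L^\infty$ function, or equivalently one can invoke a non-null invariant set directly from $f$. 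Alternatively, this direction connects cleanly to Proposition \ref{Prop: conjugacy class condition discrete case}: a finite orbit of $n_0$ is closely related to $N_g$ being finite for suitable $g$, whose annihilator $\Fix_{\widehat{D}}(g) = N_g^\perp$ is then non-null, again obstructing ergodicity.

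I expect the main obstacle to be the careful passage between the $L^2$ Fourier picture and the measure-theoretic statements about invariant sets and $L^\infty$. The Fourier-coefficient argument lives naturally in $L^2$, but ergodicity is phrased via invariant \emph{Borel sets} (or equivalently $L^\infty$ invariants); bridging these requires either approximating $\mathbf{1}_E$ properly or arguing that the span of finite-orbit characters generates the invariant $L^\infty$-algebra. A secondary subtlety is ensuring the invariance condition $c_{hn} = c_n$ is derived rigorously from $h_*\mu = \mu$ (the dual action is pmp since $\widehat{D}$ is compact) and the change-of-variables formula for the inner product $\langle \mathbf{1}_E, n\rangle$. These are routine but must be stated carefully to avoid conflating a.e.-invariance with strict invariance.
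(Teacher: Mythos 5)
Your proof is correct. The paper does not actually prove this proposition --- it simply cites \cite[Proposition 14.D.6]{Bekka-delaharpe2020} --- so there is no in-paper argument to compare against, but the Fourier-analytic argument you give is the standard one and is complete. Two small remarks: the ``obstacles'' you flag at the end are not real issues, since for $(1)\Rightarrow(2)$ the function $f=\sum_{n\in O} n$ is a \emph{finite} sum of continuous characters, hence genuinely continuous, bounded, and everywhere (not just a.e.) invariant, so it lies in $L^\infty(\widehat{D})^\Gamma\setminus \C\cdot 1$ with no truncation needed; and for $(2)\Rightarrow(1)$ the indicator $\mathbf{1}_E$ already lies in $L^2$, the identity $c_{hn}=c_n$ follows directly from $h_*\mu=\mu$ and $(h\chi)(n)=\chi(h^{-1}n)$, and square-summability of a sequence constant on an infinite orbit forces those coefficients to vanish exactly as you say. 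One should also note, as you implicitly do, that the orbit $O$ of a non-trivial $n_0$ under automorphisms never contains the identity of $D$, so $f$ is orthogonal to the constants.
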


A set of necessary and sufficient conditions for factoriality of the crossed product of a pmp action (more generally a non-singular action) of a discrete countable group was first recorded in \cite{Vaes20}. This follows as a corollary from the more general characterization of factoriality of groupoid von Neumann algebras (see \cite[Corollary 6.4]{BCDK24}). In our setting this translates to the following result:  

\begin{thm}
\label{Thm: discrete abelian groups crossed products}
    Let $\Gamma \actson D$ be an action of a countable infinite group on a discrete abelian group. Let $G = D \rtimes \Gamma$ denote the semidirect product. Then the following are equivalent: 
    \begin{enumerate}
        \item The action $\Gamma \actson \widehat{D}$ is ergodic and for every $g\neq e$, whenever $D_{g}^{\perp}$ is a non Haar-null, then $g$ has an infinite conjugacy class in $\Gamma$.
        
        \item For all $n \neq e$, the orbits $\Gamma \cdot n$ are infinite and for every $g\neq e$, if $D_{g}$ is finite, then $g$ has an infinite conjugacy class in $\Gamma$. 
        
        \item Every non-trivial conjugacy class of $G$ is infinite, i.e., $G$ is icc.   
        \item $L(G)$ is a II$_1$ factor. 
    \end{enumerate}
\end{thm}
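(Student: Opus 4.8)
The plan is to prove all four statements equivalent by establishing $(3)\Leftrightarrow(4)$ and $(1)\Leftrightarrow(2)$ from results already in hand, and then doing the one piece of genuine work, $(2)\Leftrightarrow(3)$, by an explicit analysis of conjugacy classes in $G=D\rtimes\Gamma$. For $(3)\Leftrightarrow(4)$, note that $G$ is discrete and countable and carries the canonical trace, so by the Murray--von Neumann criterion \cite[Lemma 5.3.4]{MurrayvN4} the algebra $L(G)$ is a factor exactly when $G$ is icc; since a nontrivial icc group is infinite, the factor is then necessarily of type $\mathrm{II}_1$. For $(1)\Leftrightarrow(2)$, I would simply translate via two dictionary entries already proved: Proposition \ref{Prop: ergodicity iff infinite orbits} identifies ergodicity of $\Gamma\actson\widehat{D}$ with the condition that every nontrivial $\Gamma$-orbit in $D$ is infinite, and Proposition \ref{Prop: conjugacy class condition discrete case} says that, for discrete $D$, the set $D_g^{\perp}$ is non Haar-null if and only if $D_g$ is finite. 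Substituting these turns statement $(1)$ into statement $(2)$ word for word.

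The core is therefore $(2)\Leftrightarrow(3)$, and the first step is to record the conjugation formula. Writing elements of $G$ as pairs $(n,g)$ with $n\in D$, $g\in\Gamma$ and multiplication $(n,g)(m,h)=(n\,(g\cdot m),gh)$, a direct computation gives
\begin{align*}
    (n,h)\,(m,g)\,(n,h)^{-1} = \big((h\cdot m)\cdot n\cdot(hgh^{-1}\cdot n^{-1}),\; hgh^{-1}\big).
\end{align*}
Two specializations of this formula govern everything. Fixing $h=e$ and letting $n$ range over $D$, the first coordinate sweeps out the coset $m\cdot D_g$ while the second stays equal to $g$; hence if $D_g$ is infinite then every $(m,g)$ already has infinite conjugacy class. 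Fixing $n=e$ and letting $h$ range over $\Gamma$, the second coordinate sweeps out the $\Gamma$-conjugacy class of $g$; hence if $g$ has infinite $\Gamma$-conjugacy class then again every $(m,g)$ has infinite conjugacy class.

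With these two observations the two implications are short. For $(2)\Rightarrow(3)$, take a nontrivial $(m,g)$. If $g=e$ then $m\neq e$ and the class of $(m,e)$ is precisely the orbit $\Gamma\cdot m$, infinite by $(2)$. If $g\neq e$, then either $D_g$ is infinite, or $D_g$ is finite and $(2)$ forces $g$ to have infinite $\Gamma$-conjugacy class; in both cases the preceding paragraph produces an infinite class. For $(3)\Rightarrow(2)$ I argue by contrapositive: if $(2)$ fails, I exhibit a nontrivial element of $G$ with finite class. If some $\Gamma\cdot m$ with $m\neq e$ is finite, then $(m,e)$ works. Otherwise there is $g\neq e$ with $D_g$ finite and $g$ of finite $\Gamma$-conjugacy class, and I claim $(e,g)$ has finite class: the formula shows its conjugacy class is $\{(z,k):k\in\mathrm{conj}_{\Gamma}(g),\ z\in D_k\}$, and since $D_k=h\cdot D_g$ whenever $k=hgh^{-1}$, each $D_k$ has the same finite cardinality as $D_g$, so the whole class has size $\lvert\mathrm{conj}_{\Gamma}(g)\rvert\cdot\lvert D_g\rvert<\infty$.

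I expect the only delicate point to be this final witness. One must verify that when both $D_g$ and the $\Gamma$-conjugacy class of $g$ are finite the element $(e,g)$ genuinely has a finite conjugacy class, not merely finite first and second projections separately. This is exactly where the identity $D_{hgh^{-1}}=h\cdot D_g$ is needed, so as to control the first coordinate uniformly across all of the finitely many $\Gamma$-conjugates of $g$. Everything else reduces to bookkeeping with the conjugation formula together with Propositions \ref{Prop: ergodicity iff infinite orbits} and \ref{Prop: conjugacy class condition discrete case} and the Murray--von Neumann criterion.
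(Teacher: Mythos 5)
Your proposal is correct and follows essentially the same route as the paper: both reduce $(3)\Leftrightarrow(4)$ to the Murray--von Neumann icc criterion, both obtain $(1)\Leftrightarrow(2)$ by substituting Propositions \ref{Prop: ergodicity iff infinite orbits} and \ref{Prop: conjugacy class condition discrete case}, and both prove $(2)\Leftrightarrow(3)$ from the same conjugation formula, the same two specializations ($h=e$ sweeping $m\cdot D_g$, $n=e$ sweeping the $\Gamma$-class of $g$), and the same identity $D_{hgh^{-1}}=h\cdot D_g$. The only difference is cosmetic: you phrase $(3)\Rightarrow(2)$ contrapositively via an explicit finite-class witness $(e,g)$, where the paper argues the same facts directly.
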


\begin{proof}
     The implications ($3 \iff 4$) are well known, for example one can find a proof in \cite[Proposition 1.3.9]{AnaPopa10}. The implications ($1 \iff 2$) follow from Proposition \ref{Prop: conjugacy class condition discrete case} and Proposition \ref{Prop: ergodicity iff infinite orbits}.
     
     Recall that $(n,h)\cdot (m,g) = (n(h\cdot m),hg)$ and $(n,h)^{-1} = (h^{-1}\cdot n^{-1}, h^{-1})$. Similarly for elements $(n,h)$ and $(m,g)$ the conjugates are defined by: 
     \begin{align*}
         (m,g)(n,h)(m,g)^{-1} = (m(g\cdot n)(ghg^{-1} \cdot m^{-1}),ghg^{-1})
     \end{align*}
     
     $(2 \implies 3)$ Notice that if $n \neq e$, then $(e,g)(n,h)(e,g)^{-1} = (g \cdot n, ghg^{-1})$ and since the $\Gamma$ orbit of $n$ is infinite, this is an infinite set of elements as we vary $g \in \Gamma$, and the conjugacy class of $(n,h)$ is infinite. For an element of the form $(e,h)$ we have that $(m,g)(e,h)(m,g)^{-1}$ is equal to $(m(ghg^{-1} \cdot m^{-1}),ghg^{-1})$. Notice that by taking $g = e$, we already have that if $D_{h}$ is infinite then the conjugacy class of $(e,h)$ is infinite. If not, then by $(2)$ we have that the conjugacy class of $h$ in $\Gamma$ is infinite and hence the conjugacy class of $(e,h)$ in $G$ is also infinite. 

     $(3 \implies 2)$ The conjugacy class of an element of the form $(n,e)$ for $n \neq e$ is infinite. Hence as $D$ is abelian we have that the set $\{g\cdot n \; | \; g \in \Gamma\}$ is infinite, thus giving that the $\Gamma$-orbit of $n$ is infinite. Now for an element of the form $(e,h)$, we have that the conjugacy class in $G$ is the set $\{(n(ghg^{-1} \cdot n^{-1}),ghg^{-1}) \; |\; g \in \Gamma, n \in D\}$. If $h$ has a finite conjugacy class in $\Gamma$, then for some element $ghg^{-1}$, the set $\{n(ghg^{-1} \cdot n^{-1}) \; | \; n \in D\}$ must be infinite, i.e. $D_{ghg^{-1}}$ must be infinite. A simple calculation shows that $D_{ghg^{-1}} = g\cdot D_h$, which means that $D_h$ must be infinite.
\end{proof}

\begin{cor}
    Let $\Gamma \leq \GL(n,\Z)$ and consider the linear action $\Gamma \actson \Z^n$ and let $G = \Z^n \rtimes \Gamma$ be the semidirect product. Suppose there exists a non-trivial element $g \in \Gamma$ such that $g$ has no roots of unity as eigenvalues. Then the dual action $\Gamma \actson \T^n$ is essentially free and ergodic and $L(G)$ is a factor.     
\end{cor}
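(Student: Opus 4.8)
The plan is to verify the hypotheses of Theorem \ref{Thm: discrete abelian groups crossed products} for $D = \Z^n$, whose Pontryagin dual is $\widehat{\Z^n} = \T^n$; freeness and ergodicity of the dual action $\Gamma \actson \T^n$ will fall out along the way. The central computation is that of the subgroups $N_g$. Writing $\Z^n$ additively, for $g \in \Gamma$ we have $N_g = \{g\cdot m - m : m \in \Z^n\} = (g - I)\Z^n$. For any nontrivial $g$ the integer matrix $g - I$ is nonzero, so $(g-I)\Z^n$ is a nontrivial subgroup of $\Z^n$ and is therefore infinite.

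First I would settle freeness. By Proposition \ref{Prop: conjugacy class condition discrete case}, the fixed-point set $\Fix_{\T^n}(g) = N_g^\perp$ is non-null if and only if $N_g$ is finite. Since $N_g$ is infinite for every $g \neq e$, each $\Fix_{\T^n}(g)$ is Haar-null, and as $\Gamma$ is countable the set of points with nontrivial stabilizer is null. Thus the dual action is (essentially) free, and I would note that this part uses no assumption on eigenvalues.

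The hypothesis enters only through ergodicity. By Proposition \ref{Prop: ergodicity iff infinite orbits} it suffices to show that the $\Gamma$-orbit of every nonzero $v \in \Z^n$ is infinite. Suppose not: if some $v \neq 0$ had finite orbit, then the cyclic group $\langle g \rangle$ would permute this finite set, so $g^m v = v$ for some integer $m \geq 1$. Then $(g^m - I)v = 0$ with $v \neq 0$ forces $1$ to be an eigenvalue of $g^m$, i.e. some eigenvalue $\lambda$ of $g$ satisfies $\lambda^m = 1$. This contradicts the assumption that $g$ has no root of unity among its eigenvalues. Hence all nontrivial orbits are infinite and $\Gamma \actson \T^n$ is ergodic.

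Finally, for factoriality I would check condition (2) of Theorem \ref{Thm: discrete abelian groups crossed products}. Its first clause, that all nontrivial orbits are infinite, is exactly what we just proved. Its second clause, that whenever $D_g = N_g$ is finite the element $g$ has an infinite conjugacy class in $\Gamma$, is vacuously true, since $N_g$ is never finite for $g \neq e$. Therefore $L(G)$ is a II$_1$ factor. I expect the only genuine content to be the eigenvalue argument for ergodicity; freeness and the second factoriality clause are immediate consequences of the infiniteness of $N_g$, so I anticipate bookkeeping rather than any real obstacle there.
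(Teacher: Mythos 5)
Your proposal is correct and follows essentially the same route as the paper: freeness of the dual action from $N_g=(g-I)\Z^n$ being infinite via Proposition \ref{Prop: conjugacy class condition discrete case}, ergodicity from the root-of-unity eigenvalue argument forcing all nontrivial $\langle g\rangle$-orbits (hence all $\Gamma$-orbits) to be infinite, and factoriality via Theorem \ref{Thm: discrete abelian groups crossed products}. The only difference is that you spell out the (vacuous) second clause of condition (2), which the paper leaves implicit.
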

\begin{proof}
    Let $N = \Z^n$. For $h\neq I$ in $\Gamma$, notice that $N_h = \IM(h - I)$ and since $h - I \neq 0$, the image is an infinite subgroup of $\Z^n$, implying that $\Gamma \actson \widehat{N}$ is essentially free by Proposition \ref{Prop: conjugacy class condition discrete case}. The action of the infinite subgroup $ \langle g \rangle \actson \Z^n$ is free as $g$ has no eigenvalues which are roots of unity. Then every non-trivial orbit of $ \langle g \rangle \actson \Z^n$ is infinite, and hence every non-trivial orbit of $\Gamma \actson \Z^n$ is infinite. Thus $\Gamma \actson \widehat{N}$ is ergodic by Proposition \ref{Prop: ergodicity iff infinite orbits}. The result now follows from Theorem \ref{Thm: discrete abelian groups crossed products}. 
\end{proof}

For the purposes of Section \ref{Sec: The general case: double ergodicity}, we need a characterization of actions $\Gamma \actson D$ such that the dual action $\Gamma \actson \widehat{D}$ is not only ergodic, but mixing. The main result that we will use is \cite[Theorem 1.6]{Schmidt} that we state here. 

\begin{thm} \textup{\cite[Theorem 1.6]{Schmidt}}
    \label{Theorem: mixing iff no ergodic subgroups}
    Let $\Gamma \actson D$ be an infinite discrete group acting on a discrete abelian group by group automorphisms. Then the following are equivalent: 
    \begin{enumerate}
        \item The dual action $\Gamma \actson \widehat{D}$ is mixing, 
        \item For any infinite subgroup $\Lambda \leq \Gamma$, the dual action $\Lambda \actson \widehat{D}$ is ergodic,
        \item For any infinite subgroup $\Lambda \leq \Gamma$, all non-trivial $\Lambda$-orbits in $D$ are infinite,
        \item  Every non-trivial stabilizer of $\Gamma \actson D$ is finite.  
    \end{enumerate}
\end{thm}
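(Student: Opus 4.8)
The plan is to reduce the four-way equivalence to two easy equivalences together with a single Fourier-analytic computation. First, $(2) \iff (3)$ is immediate: for a fixed infinite subgroup $\Lambda \leq \Gamma$ one applies Proposition \ref{Prop: ergodicity iff infinite orbits} to the restricted action $\Lambda \actson D$, which says that $\Lambda \actson \widehat{D}$ is ergodic precisely when every non-trivial $\Lambda$-orbit in $D$ is infinite; quantifying over all infinite $\Lambda$ gives the claim. Second, $(3) \iff (4)$ is elementary group theory. For $(4) \implies (3)$, if some infinite $\Lambda$ had a finite non-trivial orbit $\Lambda \cdot n$, then $[\Lambda : \Stab_\Lambda(n)]$ would be finite, forcing $\Stab_\Lambda(n)$—and hence $\Stab_\Gamma(n)$—to be infinite, contradicting $(4)$. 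Conversely, for $(3) \implies (4)$, an infinite stabilizer $\Stab_\Gamma(n)$ with $n \neq e$ is itself an infinite subgroup with the singleton (hence finite) orbit $\{n\}$, contradicting $(3)$.

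The substantive content is $(1) \iff (4)$, which I would prove by Fourier analysis on the compact group $\widehat{D}$. By Pontryagin duality $\widehat{\widehat{D}} \cong D$, so the characters $e_n : \widehat{D} \to \T$, $e_n(\chi) = \chi(n)$, indexed by $n \in D$, form an orthonormal basis of $L^2(\widehat{D})$ for the normalized Haar measure. A direct computation from the dual action shows that the Koopman operator satisfies $U_g e_n = e_{g \cdot n}$, so $\Gamma$ acts on this basis simply by permuting it according to $\Gamma \actson D$. Since $e_e \equiv 1$, the mean-zero subspace $L^2_0(\widehat{D})$ is spanned by $\{e_n : n \neq e\}$, and the matrix coefficients are
\begin{align*}
    \langle U_g e_n, e_m \rangle = \langle e_{g \cdot n}, e_m \rangle = \delta_{g \cdot n, \, m}, \qquad n, m \neq e.
\end{align*}

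Recall that the pmp mixing condition from the preliminaries is equivalent, via its standard Koopman reformulation, to $\langle U_g f_1, f_2 \rangle \to 0$ as $g \to \infty$ for all $f_1, f_2 \in L^2_0(\widehat{D})$. Taking $m = n$ above, mixing forces $\{g \in \Gamma : g \cdot n = n\} = \Stab_\Gamma(n)$ to be finite for every $n \neq e$, which is exactly $(4)$. Conversely, assuming $(4)$, for any $n, m \neq e$ the set $\{g : g \cdot n = m\}$ is either empty or a coset of the finite group $\Stab_\Gamma(n)$, hence finite, so each basis matrix coefficient is nonzero for only finitely many $g$ and therefore tends to $0$; a standard truncation—approximating $f_1, f_2$ by finite linear combinations of the $e_n$ and using $\norm{U_g} \leq 1$ to bound the tails—upgrades this to decay on all of $L^2_0(\widehat{D})$, giving mixing. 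The only point requiring care is this final density step, where matrix-coefficient decay on an orthonormal basis must be promoted to genuine mixing; the uniform bound $\norm{U_g} \leq 1$ makes the estimate routine, but it is the one place where some honest (if elementary) work is needed. This closes the chain $(1) \iff (4) \iff (3) \iff (2)$.
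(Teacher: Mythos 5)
The paper does not prove this statement at all --- it is imported verbatim as \cite[Theorem 1.6]{Schmidt} and used as a black box --- so there is no internal proof to compare against. Your argument is correct and self-contained, and it is essentially the standard proof of Schmidt's result. The reductions $(2)\Leftrightarrow(3)$ (via Proposition \ref{Prop: ergodicity iff infinite orbits} applied to each restricted action $\Lambda \actson D$) and $(3)\Leftrightarrow(4)$ (orbit--stabilizer in both directions) are sound, and the Fourier-analytic core of $(1)\Leftrightarrow(4)$ is right: with the paper's convention $(g\cdot\chi)(n)=\chi(g^{-1}n)$ one indeed gets $U_g e_n = e_{g\cdot n}$, so $\langle U_g e_n, e_m\rangle = \delta_{g\cdot n,\,m}$; taking $m=n$ shows mixing forces $\Stab_\Gamma(n)$ finite, and conversely finiteness of stabilizers makes each set $\{g : g\cdot n = m\}$ empty or a finite coset of $\Stab_\Gamma(n)$, after which the truncation argument you describe (approximating mean-zero functions by finite linear combinations of characters and using that $U_g$ is unitary) closes the gap. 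The only points worth making explicit if this were written out in full are (i) the standard equivalence between the set-theoretic definition of mixing given in the preliminaries and the decay of correlations on $L^2_0(\widehat{D})$, which follows from bilinearity and density of simple functions, and (ii) the degenerate case of finite $\Gamma$, where all four conditions hold vacuously. Neither is a gap; the proof is complete.
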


As a trivial application of Theorem \ref{Theorem: mixing iff no ergodic subgroups}, we have that: 

\begin{cor}
\label{Corr: free action implies dual is mixing}
    If $\Gamma \actson D \setminus \{0\}$ is free for an action on a discrete abelian group $D$, then the dual action $\Gamma \actson \widehat{D}$ is mixing. When $\Gamma \leq \SL(n,\Z)$ and $\Gamma \actson \Z^n$ is the linear action: if no non-trivial element $g \in \Gamma$ has 1 as an eigenvalue, then the dual action $\Gamma \actson \T^n$ is mixing. 
\end{cor}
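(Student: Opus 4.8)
The plan is to deduce both assertions directly from the equivalence of mixing with the stabilizer condition in Theorem \ref{Theorem: mixing iff no ergodic subgroups}, so that essentially no work is needed beyond unwinding definitions. First I would treat the general statement. Freeness of $\Gamma \actson D \setminus \{0\}$ means exactly that for every non-zero $d \in D$ the stabilizer $\Stab_{\Gamma}(d) = \{g \in \Gamma : g\cdot d = d\}$ is trivial; in particular every such stabilizer is finite. This is precisely condition (4) of Theorem \ref{Theorem: mixing iff no ergodic subgroups}, where ``non-trivial stabilizer'' is read as the stabilizer of a non-trivial (i.e. non-zero) element of $D$. By the equivalence of (4) with (1) in that theorem, the dual action $\Gamma \actson \widehat{D}$ is mixing. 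One may equally verify condition (3) directly: any infinite subgroup $\Lambda \leq \Gamma$ still acts freely on $D \setminus \{0\}$, so each orbit $\Lambda \cdot d$ with $d \neq 0$ is in bijection with $\Lambda$ and hence infinite.

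For the second, concrete assertion I would take $D = \Z^n$, so that $\widehat{D} = \T^n$, and reduce to the first part by showing that the eigenvalue hypothesis forces freeness on $\Z^n \setminus \{0\}$. A non-trivial $g \in \Gamma \leq \SL(n,\Z)$ fixes a non-zero lattice vector $d$ if and only if $(g - I)d = 0$ for some $0 \neq d \in \Z^n$, i.e. if and only if $\ker(g - I) \neq 0$. Since $g$ has integer entries, $\det(g-I)$ is an integer which vanishes precisely when $1$ is an eigenvalue of $g$, and in that case the rational kernel of $g-I$ is non-zero and therefore contains a non-zero integer vector after clearing denominators. Thus a non-trivial $g$ fixes a non-zero vector of $\Z^n$ exactly when $1$ is an eigenvalue of $g$. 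The hypothesis that no non-trivial $g \in \Gamma$ has $1$ as an eigenvalue therefore says precisely that the linear action $\Gamma \actson \Z^n \setminus \{0\}$ is free, and applying the first part yields that $\Gamma \actson \T^n$ is mixing.

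There is no genuine obstacle here; the only point requiring a moment's care is the reading of condition (4). The phrase ``non-trivial stabilizer'' must be interpreted as the stabilizer of a non-trivial \emph{element} of $D$ (equivalently, via condition (3), the statement that infinite subgroups of $\Gamma$ have infinite orbits), rather than as any stabilizer that happens to be a non-trivial group: indeed $\Stab_{\Gamma}(0) = \Gamma$ is typically infinite, so the latter reading would make the hypotheses unsatisfiable. With this understood, both claims follow immediately from Theorem \ref{Theorem: mixing iff no ergodic subgroups}.
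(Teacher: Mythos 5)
Your proof is correct and follows exactly the route the paper intends: the paper states this corollary as a ``trivial application'' of Theorem \ref{Theorem: mixing iff no ergodic subgroups} without writing out details, and your argument simply supplies those details (freeness gives trivial, hence finite, stabilizers of non-zero elements; the eigenvalue condition is equivalent to freeness on $\Z^n\setminus\{0\}$ since $\det(g-I)\in\Z$ vanishes iff $1$ is an eigenvalue, in which case the rational kernel contains an integer vector). Your reading of condition~(4) as ``stabilizer of a non-trivial element'' is the correct one, as your cross-check against condition~(3) confirms.
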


For subgroups of $\SL(2,\Z)$ we have the converse of Corollary \ref{Corr: free action implies dual is mixing} as well:

\begin{prop}
\label{prop: 1 not eigenvalue implies mixing}
    Consider the linear action $\Gamma \actson \Z^2$ for $\Gamma \leq \SL(2,\Z)$. The dual action $\Gamma \actson \T^2$ is mixing if and only if no non-trivial $g \in \Gamma$ has 1 as an eigenvalue.  
\end{prop}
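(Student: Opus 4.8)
The plan is to combine Theorem~\ref{Theorem: mixing iff no ergodic subgroups} with the fact that in $\SL(2,\Z)$ an eigenvalue $1$ forces unipotence. One direction is already available: if no non-trivial $g \in \Gamma$ has $1$ as an eigenvalue, then Corollary~\ref{Corr: free action implies dual is mixing} gives that $\Gamma \actson \T^2$ is mixing. So I would only need to establish the converse, which I would phrase as a contrapositive.

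Suppose some non-trivial $g \in \Gamma$ has $1$ as an eigenvalue. Since $g \in \SL(2,\Z)$, the product of its two eigenvalues is $\det g = 1$, so the second eigenvalue is also $1$ and the characteristic polynomial is $(\lambda - 1)^2$. By Cayley--Hamilton, $N \coloneqq g - I$ satisfies $N^2 = 0$, so $g$ is unipotent. First I would record that $g$ then has infinite order: expanding $g^k = (I+N)^k = I + kN$, which equals $I$ only when $k = 0$ since $N \neq 0$. Hence $\langle g \rangle$ is an infinite subgroup of $\Gamma$.

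Next I would locate a fixed integer vector. As $\det(g-I) = 0$, the kernel of $g - I$ is a non-zero rational subspace of $\Q^2$, so after clearing denominators it contains a primitive vector $v \in \Z^2 \setminus \{0\}$ with $g v = v$. Then the $\langle g \rangle$-orbit of $v$ is the single point $\{v\}$, so the infinite subgroup $\langle g \rangle \leq \Gamma$ has a non-trivial finite orbit in $\Z^2$; equivalently, $\Stab(v) \supseteq \langle g \rangle$ is infinite. This contradicts condition (3) (equivalently, condition (4)) of Theorem~\ref{Theorem: mixing iff no ergodic subgroups}, so the dual action $\Gamma \actson \T^2$ is not mixing, completing the contrapositive and hence the proposition.

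I expect no serious obstacle here; the argument is genuinely short. The one place where the hypothesis $n = 2$ is essential is the step from a single eigenvalue $1$ to unipotence, and thus to an infinite cyclic stabilizer. In higher dimensions an element of $\SL(n,\Z)$ may have $1$ as an eigenvalue while being of finite order (for instance acting as a finite-order rotation on a complementary subspace), so it need not generate an infinite stabilizer; this is exactly why the converse is special to the two-dimensional case.
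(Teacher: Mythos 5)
Your proof is correct and follows essentially the same route as the paper: both reduce the statement to condition (4) of Theorem~\ref{Theorem: mixing iff no ergodic subgroups} and exploit the fact that in $\SL(2,\Z)$ an element with eigenvalue $1$ is unipotent, so that stabilizers of non-zero vectors are either trivial or infinite. The paper packages this by conjugating the full stabilizer into the group $\left(\begin{smallmatrix}1 & \Z\\ 0 & 1\end{smallmatrix}\right)$, whereas you verify the same dichotomy directly via Cayley--Hamilton; the difference is purely presentational.
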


\begin{proof}
    By Theorem \ref{Theorem: mixing iff no ergodic subgroups}, we need to show that every non-trivial stabilizer is finite. For a vector $(m,n) \in \Z^2$, let $\Lambda$ be its stabilizer in $\SL(2,\Z)$. One can check that there exists $g \in \GL(2,\Z)$ such that $g\Lambda g^{-1} = U$, where: 
    \begin{align*}
        U = \begin{pmatrix}
            1 & \Z \\ 0 & 1 
        \end{pmatrix}
    \end{align*}
    Now the stabilizer of $(m,n)$ under the $\Gamma$ action is $\Lambda \cap \Gamma$. Since $g(\Lambda \cap \Gamma)g^{-1}$ is a subgroup of $U$, $\Gamma \cap \Lambda$ is finite if and only if it is trivial. Hence the stabilizers are all finite if and only if the action is free, as required.
 \end{proof}

The following is an interesting observation about the mixing property for automorphisms on $\T^n$. 

\begin{prop}
    Let $g \in \SL(n,\Z)$ be an infinite order element. Then the dual $\Z$-action $ \langle g \rangle \actson \T^n$ is ergodic if and only if it is mixing.      
\end{prop}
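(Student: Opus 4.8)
The plan is to run everything through the orbit characterizations already established for discrete abelian targets, thereby reducing the analytic statement to an elementary fact about subgroups of $\Z$. Since $g$ has infinite order, $\langle g \rangle \cong \Z$, and the action $\langle g \rangle \actson \T^n$ is precisely the dual of the linear action $\langle g \rangle \actson \Z^n$. Thus both Proposition \ref{Prop: ergodicity iff infinite orbits} and Theorem \ref{Theorem: mixing iff no ergodic subgroups} apply with $D = \Z^n$ and $\Gamma = \langle g \rangle$. By Proposition \ref{Prop: ergodicity iff infinite orbits}, ergodicity of $\langle g \rangle \actson \T^n$ is equivalent to every non-trivial $\langle g \rangle$-orbit in $\Z^n$ being infinite, while by Theorem \ref{Theorem: mixing iff no ergodic subgroups} mixing is equivalent to every non-trivial $\Lambda$-orbit in $\Z^n$ being infinite for \emph{every} infinite subgroup $\Lambda \leq \langle g \rangle$.

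One implication is free: mixing implies ergodicity (as recalled in the preliminaries), or equivalently one simply takes $\Lambda = \langle g \rangle$ in the mixing criterion. For the converse I would use that the infinite subgroups of $\Z \cong \langle g \rangle$ are exactly the groups $\langle g^k \rangle$ with $k \geq 1$, each of finite index $k$. Fix such a $k$ and a non-zero $v \in \Z^n$. Writing $\langle g \rangle = \bigcup_{i=0}^{k-1} g^i \langle g^k \rangle$, we get $\langle g \rangle \cdot v = \bigcup_{i=0}^{k-1} g^i (\langle g^k \rangle \cdot v)$, so the full orbit is a union of $k$ translates of the $\langle g^k \rangle$-orbit. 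Hence $\langle g^k \rangle \cdot v$ is finite if and only if $\langle g \rangle \cdot v$ is finite. Assuming ergodicity, every orbit $\langle g \rangle \cdot v$ with $v \neq 0$ is infinite, so every $\langle g^k \rangle \cdot v$ is infinite as well; this is exactly condition (3) of Theorem \ref{Theorem: mixing iff no ergodic subgroups}, and mixing follows.

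I expect there to be no serious obstacle here: the entire analytic weight sits inside the cited Theorem \ref{Theorem: mixing iff no ergodic subgroups} of Schmidt, and what remains is the transparent finite-index orbit computation above together with the classification of subgroups of $\Z$. As a sanity check against the classical picture, note that a non-trivial $\langle g \rangle$-orbit in $\Z^n$ is finite precisely when $g^k v = v$ for some $k \geq 1$ and some $v \neq 0$, i.e. when $g^k$ fixes a non-zero integer vector, equivalently when some eigenvalue of $g$ is a root of unity; this condition is manifestly insensitive to passing from $g$ to $g^k$, which is the conceptual reason ergodicity and mixing coincide for a single toral automorphism.
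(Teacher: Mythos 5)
Your proof is correct and follows essentially the same route as the paper's: both arguments funnel the statement through Theorem \ref{Theorem: mixing iff no ergodic subgroups} together with Proposition \ref{Prop: ergodicity iff infinite orbits}, with the key point being that every infinite subgroup of $\Z \cong \langle g \rangle$ has finite index. The only cosmetic difference is that you verify condition (3) of that theorem (orbits of infinite subgroups) directly, while the paper argues by contraposition via condition (4) (an infinite stabilizer forces a finite orbit); these are two sides of the same orbit--stabilizer coin.
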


\begin{proof}
    Suppose that the action is not mixing, then by Theorem \ref{Theorem: mixing iff no ergodic subgroups}, we have that there is a vector $d \in \Z^n$ such that $\Stab(d)$ is infinite. Thus $\Stab(d)$ is generated by an element of the form $g^i$ for some non zero integer $i$, which implies that the image of the point $d$ is finite. Hence by Proposition \ref{Prop: ergodicity iff infinite orbits}, this contradicts ergodicity, as required.  
\end{proof}

We end this section with some examples of actions by automorphisms of discrete groups that are free, and hence the dual actions are mixing. 

\begin{prop}
\label{Prop: examples of mixing actions by Bernoulli shifts}
    Let $\Gamma$ be a torsion-free group, $\Lambda$ be any non-trivial abelian group. Let $H = \bigoplus_{\Gamma} \Lambda$ and consider the action $\Gamma \actson H$ by Bernoulli shifts. Then the action $\Gamma \actson H \setminus \{\id\}$ is free, and hence the dual action $\Gamma \actson \widehat{H}$ is mixing.    
\end{prop}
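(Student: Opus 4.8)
The plan is to establish freeness of $\Gamma \actson H \setminus \{\id\}$ by a direct support argument, and then obtain the mixing conclusion as an immediate consequence of Corollary \ref{Corr: free action implies dual is mixing}. Recall that $H = \bigoplus_\Gamma \Lambda$ consists of the finitely supported functions $h : \Gamma \to \Lambda$, and the Bernoulli shift acts by $(g \cdot h)_\gamma = h_{g^{-1}\gamma}$. Freeness of the action off the identity means precisely that if $g \in \Gamma$ fixes some $h \neq \id$, then $g = e$.

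First I would assume, towards a contradiction, that some nontrivial $g \in \Gamma$ satisfies $g \cdot h = h$ for a nontrivial $h \in H$. Unwinding the definition of the shift, this says $h_{g^{-1}\gamma} = h_\gamma$ for every $\gamma \in \Gamma$, and iterating gives $h_{g^n \gamma} = h_\gamma$ for all $n \in \Z$ and all $\gamma$. In other words, $h$ is constant along each right coset $\langle g \rangle \gamma$ of the cyclic subgroup $\langle g \rangle$.

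The key step is to invoke torsion-freeness of $\Gamma$: since $g \neq e$, it has infinite order, so $\langle g \rangle \cong \Z$ and each right coset $\langle g \rangle \gamma$ is infinite. Because $h$ takes a single value on each such infinite coset while having finite support, that value must be the identity $e \in \Lambda$ on every coset meeting the support; hence the support is empty and $h = \id$, contradicting our choice. This yields freeness. Note that this portion of the argument makes no use of commutativity of $\Lambda$.

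Finally, for the mixing statement I would specialise to the abelian case (which is what makes the Pontryagin dual $\widehat{H}$ available) and apply Corollary \ref{Corr: free action implies dual is mixing} with $D = H$: a free action on $D \setminus \{0\}$ forces the dual action $\Gamma \actson \widehat{H}$ to be mixing. I do not expect a genuine obstacle here; the only points demanding care are the correct bookkeeping of the shift indices and the recognition that torsion-freeness is exactly what guarantees the cosets are infinite, which is precisely the feature incompatible with finite support.
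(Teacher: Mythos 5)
Your argument is correct and is essentially the paper's own proof: both reduce to the observation that a finitely supported function fixed by a nontrivial $g$ is constant on the $\langle g\rangle$-orbits, which are infinite by torsion-freeness (the paper phrases this contrapositively, showing a nonempty finite level set $F=f^{-1}(\{k\})$ with $gF=F$ forces $g$ to have finite order). Your added remark that $\Lambda$ must be abelian for $\widehat{H}$ to make sense is a fair observation about the statement, but does not change the argument.
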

\begin{proof}
    For $\Gamma \actson H$, let $f: \Gamma \rightarrow \Lambda$ be a non-empty finitely supported function and suppose that $g\cdot f = f$ for a non-trivial $g \in \Gamma$. Then there exists an element $k \in \Lambda$ such that $F = f^{-1}(\{k\})$ is a non-trivial finite subset in $\Gamma$. This means that $g \cdot F = F$ under the translation action. But if this is the case then $g^n \cdot F = F$ for all $n \in \N$. Since $F$ is finite, this implies that $g^k = e$ for some positive integer $k$, contradicting the fact that $\Gamma$ is torsion-free.
\end{proof}

\section{Ergodicity of linear actions on $\R^n$}
\label{Sec: actions on R^m}

Notice that by virtue of Corollary \ref{Cor: implications of factoriality}, factoriality of semidirect product group von Neumann algebras for actions on connected abelian groups, essentially boils down to characterizing factoriality for such actions on $\R^n$.

\begin{lem}
    \label{Lemma: every action on R^n is linear}
    Let $\Gamma$ be a discrete countable group and $\alpha$ be a faithful action of $\Gamma$ on $\mathbb{R}^{n}$ by continuous automorphisms. Then the action is linear, i.e. $\alpha$ is conjugate to an action $\Gamma \leq \GL(n,\mathbb{R}) \actson \mathbb{R}^{n}$. Moreover, the action is Lebesgue measure preserving if and only if $|\det(g)| = 1$ for all $g \in \Gamma$.  
\end{lem}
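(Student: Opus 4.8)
The plan is to reduce the statement to the classical fact that the group of continuous group automorphisms of $(\R^n,+)$ is exactly $\GL(n,\R)$. Concretely, I would first show that every continuous group homomorphism $\phi \colon \R^n \to \R^n$ is automatically $\R$-linear. Once this is established, each $\alpha_g$ is given by an invertible matrix, the assignment $g \mapsto \alpha_g$ becomes a group homomorphism $\Gamma \to \GL(n,\R)$, and faithfulness of $\alpha$ makes it injective; thus $\alpha$ is identified, via the isomorphism $\Aut(\R^n) \cong \GL(n,\R)$, with the linear action of a countable subgroup $\Gamma \le \GL(n,\R)$ on $\R^n$.

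For the linearity claim I would argue in the standard way. Additivity of $\phi$ gives $\phi(kx) = k\phi(x)$ for every integer $k$ and every $x \in \R^n$, and then $\phi(qx) = q\phi(x)$ for every rational $q$ by clearing denominators. Since $\Q$ is dense in $\R$ and $\phi$ is continuous, this upgrades to $\phi(cx) = c\phi(x)$ for all $c \in \R$, so that $\phi$ is $\R$-linear. Evaluating on the standard basis $e_1,\dots,e_n$ then exhibits $\phi$ as multiplication by the matrix whose columns are $\phi(e_1),\dots,\phi(e_n)$. Because $\alpha_g$ is an automorphism it is bijective, so its matrix lies in $\GL(n,\R)$. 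This is the only step requiring a genuine (if routine) argument, so I expect the passage from $\Q$-linearity to $\R$-linearity via continuity to be the main — though mild — obstacle; everything else is bookkeeping.

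Finally, for the measure-preserving assertion I would invoke the Haar-measure computation recorded in the preliminaries: for $A \in \GL(n,\R)$ and a Borel subset $E \subseteq \R^n$ one has $\lambda(A \cdot E) = |\det A|\,\lambda(E)$. Consequently $\alpha_g$ preserves the Lebesgue measure precisely when $|\det \alpha_g| = 1$, and hence the whole action is Lebesgue measure preserving if and only if $|\det \alpha_g| = 1$ for every $g \in \Gamma$, which is exactly the condition $\Gamma \le \SL(n,\R)$ (interpreting the determinant condition up to sign). This completes the reduction and the characterization.
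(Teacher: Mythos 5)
Your proposal is correct and follows essentially the same route as the paper: additivity gives $\Z$-linearity, hence $\Q$-linearity, continuity upgrades this to $\R$-linearity, and the pushforward formula $\lambda(A\cdot E) = |\det A|\,\lambda(E)$ handles the measure-preserving claim. Your parenthetical about the determinant only being pinned down up to sign is a fair observation (matrices with determinant $-1$ also preserve Lebesgue measure), but this matches the level of precision in the paper's own statement and proof.
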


\begin{proof}
    Let $\alpha\in \Aut(\R^n)$, then clearly $\alpha$ is $\Z$-linear. For every $n\in \N$ and $x\in \R^n$,
    \begin{align*}
        \alpha\left(\frac{1}{n} x\right)
        = \frac{1}{n} n \alpha\left(\frac{1}{n} x\right)
        = \frac{1}{n} \alpha(x),
    \end{align*}
    which implies that $\alpha$ is in fact $\Q$-linear. The continuity of $\alpha$ then implies that $\alpha$ is $\R$-linear. Since $\alpha$ is invertible, there exists $A\in \GL(n,\R)$ such that $\alpha=A$, i.e. $\alpha x = Ax$ for every $x\in \R^n$. Further, if $\mu$ is the Haar measure on $\R^n$, then
    \begin{align*}
        \alpha_*\mu = \frac{1}{|\det(A)|} \mu,
    \end{align*}
    which proves the second part of the lemma.
\end{proof}

For what follows we identify the Pontryagin dual $\widehat{\R^n}$ with $\R^n$ via the following isomorphism.
\begin{align*}
    \R^n\ni t\mapsto \chi_t\in \widehat{\R^n},\quad \text{ where }\quad \chi_t(s) = e^{i\langle s,t \rangle}
\end{align*}
We shall henceforth not always explicitly distinguish them. Let $\phi$ ($\phi_n$ if $n$ needs to be emphasized) be the order-$2$ automorphism of $\GL(n,\R)$ given by $\phi(A) = (A^T)^{-1}$. Note that $\phi$, in particular, leaves $\SL(n,\R)$ invariant. Let $\alpha: \GL(n,\R) \actson \R^n$ be the linear action. Then for $g \in \GL(n,\R)$ and $s,t \in \R^n$, the dual action is given by:  
    \begin{align*}
        (\widehat{\alpha}_g (t))(s)
        = t(\alpha_g^{-1} (s))
        = t(g^{-1}s)
        = e^{i\langle g^{-1}s,t \rangle} 
        = e^{i\left\langle s, (g^{-1})^t t \right\rangle}
        = ((\alpha\circ \phi)_g(t)) (s)
    \end{align*}

For $\Gamma<\GL(n,\R)$, let us denote the image $\phi(\Gamma)$, consisting of the transposes of every element of $\Gamma$ by $\Gamma^T$. By the discussion above, the dual action of $\Gamma \actson \R^n$ is precisely the linear action $\Gamma^T \actson \R^n$. Let us state some examples and non-examples of measure-preserving ergodic actions on $\R^n$ by continuous automorphisms. 

\begin{exmp}
    \label{example: ergodic actions on R^n}
    \begin{enumerate}
        \item It is well known that if $\Gamma < \SL(n,\R)$ is a lattice for $n \geq 2$, then as an application of Moore's ergodicity theorem, the linear action $\Gamma \actson \R^n$ is ergodic. For a proof we refer the reader to \cite[Example 2.2.9]{Zimmer84} and \cite[Lemma 5.6]{PopaVaes11}. Since the transpose of a lattice is again a lattice, $\Gamma^T \actson \R^n$ is also ergodic. 
        
        \item If $\Gamma < \SL(n,\R)$ is a countable dense subgroup, then $\Gamma \actson \R^n$ is ergodic. Since $\SL(n,\R)$ has subgroups acting ergodically on $\R^n$, of course the action $\SL(n,\R) \actson \R^n$ is ergodic. In general for a continuous nonsingular ergodic action of a locally compact second countable group on a standard $\sigma$-finite measure space $G \actson (X,\mu)$, the action restricted to dense subgroups is ergodic. This essentially follows because the canonical group homomorphism $G \rightarrow \Aut(L^{\infty}(X,\mu))$ is continuous. Since ergodicity does not depend on the topology of $\Gamma$, we can consider $\Gamma$ as a discrete group and the action remains ergodic. Again, because the transpose $\Gamma^T$ of such a dense subgroup $\Gamma$ is dense in $\SL(n,\R)$, we have that $\Gamma^T \actson \R^n$ is ergodic.  

        \item Contrary to the above examples, measure preserving actions of abelian groups on $\R^n$ are never ergodic. A sketch of the proof for $\Z$-actions appears in a lecture note of Halmos (\cite{Halmos56}). Halmos also conjectured in \cite{Halmos56} that if a locally compact group $G$ has an automorphism that is ergodic with respect to a left-invariant Haar measure, then $G$ is compact. This conjecture was solved for $\Z$-actions in \cite{Rajagopalan66}. In general it follows from \cite[Theorem 1.1]{Dani01} that the action of any abelian subgroup of $\SL(n,\R)$ on $\R^n$ is not ergodic. Since the transpose of an abelian group is abelian, the dual action of an abelian group is also non-ergodic. 
    \end{enumerate}
\end{exmp}

Because of these special cases, it is tempting to think that an analog of Proposition \ref{Prop: ergodicity iff infinite orbits} holds for actions on $\R^n$. Indeed, the natural question is: for a countable group $\Gamma < \SL(n,\R)$, is it true that $\Gamma \actson \R^n$ is ergodic if and only if $\Gamma^T \actson \R^n$ is ergodic? Notice that for $n=2$, this is true. Indeed the automorphism $\phi: \SL(2,\R) \rightarrow \SL(2,\R)$ given by $\phi(A) = (A^T)^{-1}$ is inner, and hence $\Gamma \actson \R^2$ and $\Gamma^T \actson \R^2$ are orbit equivalent (in fact conjugate). Surprisingly, this is not true for higher dimensions, as we demonstrate here.

\begin{defn}
Let $n\geq 3$ and $e_1,\dots e_n$ be the standard basis for $\R^n$. Let $\Gamma_0 < \SL(n,\R)$ be the following discrete countable subgroup: 
    \begin{align*}
        \Gamma_0 = \{ A\in \SL(n,\Z)\, |\, Ae_1 = e_1 \}.
    \end{align*}
\end{defn}

\begin{lem}
    Let $\Lambda \actson H$ be an action of a discrete countable group on a locally compact abelian group. Suppose that there is a common fixed point $h \neq e \in H$ for all of $\Lambda$. Then the dual action $\Lambda \actson \widehat{H}$ is not ergodic.
\end{lem}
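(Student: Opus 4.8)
The plan is to use the fixed point $h$ to manufacture a non-constant $\Lambda$-invariant Borel function on $\widehat{H}$, which directly obstructs ergodicity. Concretely, I would define the evaluation map $F \colon \widehat{H} \to \T$ by $F(\chi) = \chi(h)$. This is precisely the image of $h$ under the canonical map $H \to \widehat{\widehat{H}}$, so it is a continuous homomorphism. The crux is its invariance: for any $\lambda \in \Lambda$ and $\chi \in \widehat{H}$, the definition of the dual action gives
\begin{align*}
    F(\lambda \cdot \chi) = (\lambda \cdot \chi)(h) = \chi(\lambda^{-1} h) = \chi(h) = F(\chi),
\end{align*}
where the third equality uses that $h$ is fixed by every element of $\Lambda$. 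Hence $F$ is constant on $\Lambda$-orbits.

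Next I would show that $F$ is non-constant. Since $h \neq e$, the Pontryagin duality theorem (characters separate the points of $H$) produces a character $\chi_0 \in \widehat{H}$ with $\chi_0(h) \neq 1$. Comparing $\chi_0$ against the trivial character, for which $F$ takes the value $1$, shows that $F$ assumes at least two distinct values, so $F$ is a genuinely non-constant continuous function.

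Finally I would promote this to an honest $\Lambda$-invariant Borel set that is neither null nor conull. Choosing $\chi_1, \chi_2 \in \widehat{H}$ with $F(\chi_1) \neq F(\chi_2)$ and an open arc $I \subset \T$ containing $F(\chi_1)$ with $F(\chi_2) \notin \overline{I}$, I would set $E = F^{-1}(I)$. By continuity $E$ is open and nonempty (it contains $\chi_1$), while its complement contains the nonempty open set $F^{-1}(\T \setminus \overline{I}) \ni \chi_2$. Since the Haar measure on $\widehat{H}$ is strictly positive on every nonempty open set, both $E$ and its complement are non-null, so $E$ is non-null and non-conull. By the invariance computed above, $E$ is $\Lambda$-invariant, contradicting ergodicity of $\Lambda \actson \widehat{H}$.

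The only delicate point — and the place where the hypotheses genuinely enter — is the passage from "$F$ is non-constant" to "$E$ is non-null and non-conull"; here it is the full support of the Haar measure, rather than any finiteness of the measure, that does the work. This is what lets the same argument run uniformly whether $\widehat{H}$ is compact (the case $H$ discrete) or of infinite measure.
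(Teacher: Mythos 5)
Your proof is correct and follows essentially the same route as the paper's: both identify $h$ with the evaluation character $\chi \mapsto \chi(h)$ on $\widehat{H}$ via double duality, observe it is $\Lambda$-invariant and non-constant (characters separate points), and conclude non-ergodicity. The only difference is that you make explicit the final step --- that a non-constant continuous invariant function yields a non-null, non-conull invariant Borel set because Haar measure has full support --- which the paper leaves implicit in the assertion that $h$ is a non-scalar element of $L^\infty(\widehat{H})^{\Lambda}$.
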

\begin{proof}
    By Pontryagin duality, the dual of
    $\widehat{H}$ is identified with
    $H$. Hence one can think of $h$ as a character $h: \widehat{H} \rightarrow \T$ fixed by the dual action $\Lambda \actson \widehat{H}$. In particular $h \in L^\infty(\widehat{H})^{\Lambda}$ and clearly $h$ is not a scalar. Thus $\Lambda \actson \widehat{H}$ is not ergodic. 
\end{proof}

From the above lemma it is clear that $\Gamma_0^T \actson \R^n$ is not ergodic. However we have the following: 

\begin{prop}
\label{Prop: main counterexample}
    Let $n\geq 3$. Then the linear action $\Gamma_0 \actson \R^n$ is ergodic. 
\end{prop}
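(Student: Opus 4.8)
The plan is to exploit the block structure of $\Gamma_0$. Every $A \in \Gamma_0$ fixes $e_1$, so writing $\R^n = \R e_1 \oplus V$ with $V = \langle e_2,\dots,e_n\rangle \cong \R^{n-1}$ and coordinates $(x_1,y)$, the matrix takes the form $A = \begin{pmatrix} 1 & v^T \\ 0 & B \end{pmatrix}$ with $v \in \Z^{n-1}$ and $B \in \SL(n-1,\Z)$ (the condition $\det A = 1$ forcing $\det B = 1$). Thus $A\cdot(x_1,y) = (x_1 + v^T y,\, By)$, and $\Gamma_0$ is generated by the two families $T_v = \begin{pmatrix} 1 & v^T \\ 0 & I\end{pmatrix}$ with $v \in \Z^{n-1}$, which act fiberwise on the $x_1$-line by the translation $x_1 \mapsto x_1 + v^T y$, and $M_B = \begin{pmatrix} 1 & 0 \\ 0 & B\end{pmatrix}$ with $B \in \SL(n-1,\Z)$, which act on the base $V$ by the standard linear action and fix $x_1$.

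First I would take a $\Gamma_0$-invariant $f \in L^\infty(\R^n)$ and analyze invariance under the countable subgroup $\{T_v\}$. Combining the countability of this family with Fubini's theorem, for almost every $y \in \R^{n-1}$ the slice $f(\cdot,y)$ is essentially invariant under translation by every element of the subgroup $H_y = \Z y_2 + \cdots + \Z y_n \leq \R$ generated by the coordinates of $y$. The crucial point, and where $n \geq 3$ enters, is that for $n-1 \geq 2$ almost every $y$ has two coordinates with irrational ratio, so $H_y$ is dense in $\R$ (the exceptional set being a countable union of hyperplanes, hence null). Since the translation action of $\R$ on $(\R,\mathrm{Leb})$ is ergodic and $H_y$ is dense, the restriction to the dense subgroup $H_y$ remains ergodic by the principle recalled in Example \ref{example: ergodic actions on R^n}(2). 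Hence $f(\cdot,y)$ is a.e. constant for a.e. $y$, i.e. $f(x_1,y) = \tilde f(y)$ depends only on $y$.

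Next I would feed this reduction back into invariance under the family $\{M_B\}$: since $f(x_1,By) = f(x_1,y)$ and $f$ depends only on $y$, the function $\tilde f \in L^\infty(\R^{n-1})$ is invariant under the linear action $\SL(n-1,\Z) \actson \R^{n-1}$. Because $n-1 \geq 2$, the group $\SL(n-1,\Z)$ is a lattice in $\SL(n-1,\R)$ and its linear action on $\R^{n-1}$ is ergodic by Example \ref{example: ergodic actions on R^n}(1). Therefore $\tilde f$, and with it $f$, is constant almost everywhere, which is exactly the ergodicity of $\Gamma_0 \actson \R^n$.

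The main obstacle is the first step, the fiberwise analysis: one must carefully justify that the almost-everywhere-defined family of one-dimensional translation actions is genuinely ergodic. This requires passing from the global invariance identities $f(x_1 + v^T y, y) = f(x_1,y)$ to fiberwise invariance for a.e. $y$ via countability and Fubini, verifying that $H_y$ is dense for a.e. $y$ (the only place the hypothesis $n \geq 3$, equivalently $n-1 \geq 2$, is essential here), and invoking ergodicity of a dense subgroup of translations of $\R$. Once the reduction $f(x_1,y) = \tilde f(y)$ is established, the second step is a direct appeal to Moore ergodicity for the base action.
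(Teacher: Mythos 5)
Your proof is correct, and it reaches the conclusion by a genuinely more elementary route than the paper's. Both arguments rest on the same two ingredients: the block decomposition $\R^n=\R e_1\oplus\R^{n-1}$, in which the unipotent elements $T_v$ act by the fiberwise translations $x_1\mapsto x_1+v^Ty$ and the elements $M_B$ act by $\SL(n-1,\Z)$ on the base, and the observation that $\Z y_1+\cdots+\Z y_{n-1}$ is dense in $\R$ for a.e.\ $y$ once $n-1\geq 2$. The difference is the mechanism for concluding ergodicity. The paper packages the linear action as a skew product over the ergodic base action $\Gamma_0\actson\R^{n-1}$ with the $\R$-valued multiplicative cocycle $c(g,y)=v^Ty$, and invokes Schmidt's criterion that the skew product is ergodic if and only if the essential range of $c$ is all of $\R$; the density of $H_y$ on a co-null set is then used to verify $E(c)=\R$. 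You instead take a $\Gamma_0$-invariant $f\in L^\infty(\R^n)$ and argue directly: countability of $\{T_v\}$ plus Fubini reduces to invariance of a.e.\ slice $f(\cdot,y)$ under the dense translation group $H_y$, which kills the $x_1$-dependence, after which Moore ergodicity of $\SL(n-1,\Z)\actson\R^{n-1}$ forces $\tilde f$ to be constant. Your version is self-contained, avoids the essential-range formalism entirely, and makes transparent exactly where $n\geq 3$ enters; the paper's version has the advantage of fitting the proposition into the skew-product and cocycle framework it uses elsewhere (for instance for the twisted cocycles of Section \ref{Sec: The general case: double ergodicity}) and of generalizing to cocycles with other targets. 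In a final write-up you should only spell out two routine points: the measurability of the function $\tilde f$ of $y$ alone, and the standard fact that an element of $L^\infty(\R)$ invariant under a countable dense group of translations is a.e.\ constant, which is the restriction-to-dense-subgroups principle of Example \ref{example: ergodic actions on R^n} applied to the transitive action $\R\actson\R$.
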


\begin{proof}
    Note that $\R^n$ can be decomposed as $\R\times \R^{n-1}$ so that $\Gamma_0$ acts trivially on the first copy of $\R$. Define $\alpha$ and $c$ by $g\cdot(0,v) = (c(g,v),\alpha_g(v))$ for all $g\in \Gamma_0$ and $v\in \R^{n-1}$. Then $\alpha$ is an action of $\Gamma_0$ on $\R^{n-1}$ and $c$ is a continuous multiplicative $1$-cocycle for $\alpha$. Further, the linear action of $\Gamma_0$ on $\R^n$ is the resulting skew action (see the final paragraph of \S \ref{Subsec: skew actions}). But, for every $A\in \SL(n-1,\Z)$, $w\in \Z^{n-1}$, and $g = \left(
    \begin{array}{cc}
       1  & w^T \\
       0  & A
    \end{array}
    \right)$, we have
    \begin{align*}
        \alpha_g(v) = Av
        \text{ and }
        c(g,v) = w^Tv
        \,\quad
        \forall\, v\in \R^{n-1}. 
    \end{align*}
    In particular, $\alpha$ is ergodic since $n-1\geq 2$. Thus, by \cite[Corollary 5.4]{SchmidtLectureNotes}, we only need to show that $E(c) = \R$.

    Since $n\geq 3$, the measurable set
    \begin{align*}
        P\defeq \{(v_1,v_2)\in \R^2\, |\, v_1\neq 0 \text{ and } v_2\notin \Q v_1 \}\times \R^{n-3}
    \end{align*}
    is well defined and co-null in $\R^{n-1}$. Now, let $U\subseteq \R$ be nonempty and open. For $w\in \Z^{n-1}$, let $P_w\defeq \{v\in P\, |\, w^Tv\in U \}$. Then, for every $v\in P$, there exist $m_1, m_2\in \Z$ such that $m_1v_1+m_2v_2\in U$, where $v_1, v_2$ are the first two coordinates of $v$. Hence, $P = \bigcup_{w\in \Z^{n-1}} P_w$.

    Now, let $F\subseteq \R^{n-1}$ be Borel non-null. Since $P$ is co-null in $\R^{n-1}$, there exists $w\in \Z^{n-1}$ such that $F_w\defeq F\cap P_w$ is non-null. Then, for $g\defeq \left(
    \begin{array}{cc}
       1  & w^T \\
       0  & I_{n-1}
    \end{array}
    \right)$, we have $g\cdot F_w = F_w$ and $c(g,F_w)\subseteq c(P_w)\subseteq U$. Hence, $E(c) = \R$.
\end{proof}

Thus we don't yet have a way to go back and forth between ergodicity of the dual action and a reasonable property of the original action. Nevertheless we can characterize factoriality of such group von Neumann algebras as follows:

\begin{thm}
\label{Thm: characterization of factoriality for actions on R^n}
    Let $\Gamma < \GL(n,\R)$ be a countable discrete group and let $\Gamma \actson \R^n$ be the linear action. Then $L(\R^n \rtimes \Gamma)$ is a factor if and only if $\Gamma^T \actson \R^n$ is ergodic. 
\end{thm}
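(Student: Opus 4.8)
The plan is to reduce factoriality of $L(\R^n \rtimes \Gamma)$ to an ergodicity-plus-freeness statement about the dual action, and then to observe that the dual action is automatically essentially free, so that ergodicity alone is the operative condition. First I would invoke Theorem \ref{Thm: von neumann algebra of sd product is crossed product of dual action} to identify $L(\R^n \rtimes \Gamma)$ with the crossed product $L^\infty(\widehat{\R^n}) \rtimes \Gamma$ of the dual action. Under the identification $\widehat{\R^n} \cong \R^n$ fixed before Example \ref{example: ergodic actions on R^n}, this dual action is precisely the linear action $\Gamma^T \actson \R^n$. Since each $g \in \GL(n,\R)$ scales Lebesgue measure by $|\det g|$, this action is nonsingular (though not measure-preserving when $\Gamma \not< \SL(n,\R)$), so we work in the setting of nonsingular group-measure-space von Neumann algebras.

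The key observation is that the dual action $\Gamma^T \actson \R^n$ is essentially free. For a nontrivial $g \in \Gamma$, Proposition \ref{Prop: conjugacy class condition discrete case} gives $\Fix_{\widehat{\R^n}}(g) = N_g^\perp$, where $N_g = \{gn - n : n \in \R^n\} = \IM(g - I)$. Because $g \neq I$, the linear map $g - I$ is nonzero, so $N_g$ is a nonzero linear subspace; as $N_g$ is closed under real scaling, its annihilator coincides with its orthogonal complement $\IM(g-I)^\perp = \ker(g^T - I)$, a proper subspace of $\R^n$ and hence Lebesgue-null. Thus the stabilizer of almost every point is trivial.

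With essential freeness in hand, the standard group-measure-space argument finishes both directions. If the action is not ergodic, then $L^\infty(\widehat{\R^n})^\Gamma \neq \C$, and invariant functions are central — they commute with $L^\infty(\widehat{\R^n})$ and are fixed by conjugation by each $u_g$ — so the crossed product is not a factor, giving necessity of ergodicity. Conversely, essential freeness forces $L^\infty(\widehat{\R^n})$ to be maximal abelian in the crossed product, since an element $\sum_g a_g u_g$ commuting with all of $L^\infty(\widehat{\R^n})$ must have each $a_g$ supported on the null set $\Fix(g)$ for $g \neq e$; hence the center is contained in $L^\infty(\widehat{\R^n})^\Gamma = \C$. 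Equivalently, one may cite the general factoriality criterion underlying Theorem \ref{Thm: discrete abelian groups crossed products} (from \cite{Vaes20} and \cite[Proposition 6.3]{BCDK24}) and note that the auxiliary requirement ``every $g$ with non-null fixed-point set has infinite conjugacy class'' is vacuous here, leaving ergodicity as the sole condition.

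I do not expect a serious obstacle: the content is the essential-freeness computation, which is elementary linear algebra once phrased through $N_g = \IM(g-I)$. The only mild care needed is in handling the genuinely nonsingular (non measure-preserving) regime when $\Gamma \not< \SL(n,\R)$, but the maximal-abelianness and center computations are insensitive to the modular behavior, since the relation $u_g f u_g^* = \widehat{\alpha}_g(f)$ holds for nonsingular crossed products regardless of whether the action preserves the measure.
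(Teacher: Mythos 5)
Your proposal is correct and follows essentially the same route as the paper's proof: identify $L(\R^n \rtimes \Gamma)$ with $L^\infty(\widehat{\R^n}) \rtimes \Gamma$ via Theorem \ref{Thm: von neumann algebra of sd product is crossed product of dual action}, observe that the fixed-point set of each nontrivial $g$ under the dual action is the proper subspace $\ker(g^T - I)$ and hence Lebesgue-null, so the action is essentially free, and conclude that factoriality is equivalent to ergodicity of $\Gamma^T \actson \R^n$. The only difference is that you spell out the standard maximal-abelianness/center computation that the paper leaves implicit.
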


\begin{proof}
   By Theorem \ref{Thm: von neumann algebra of sd product is crossed product of dual action}, we have that $L(\R^n \rtimes \Gamma)$ is isomorphic to the crossed product $L^{\infty}(\R^n) \rtimes \Gamma^T$. Notice that for $\Gamma^T \actson \R^n$, the set of fixed points $\Fix(g)$ for $g \in \Gamma^T$ is the 1-eigenspace of $g$. By faithfulness of the linear action, $\Fix(g)$ is a vector space of dimension at most $n-1$ and hence $\widehat{\alpha}$ is essentially free. Thus $L(\R^n \rtimes \Gamma)$ is a factor if and only if $\Gamma^T \actson \R^n$ is ergodic as required.   
\end{proof} 

For the purposes of applications in the next section, we make the following definition: 

\begin{defn}
\label{Def: dually ergodic}
    Let $\Gamma$ be a discrete countable group acting on an abelian connected locally compact group $N$. Suppose that $N$ has no compact subgroups. Then $N \cong \R^n$ by Theorem \ref{Thm: structure of abelian locally compact groups} and $\Gamma \actson N$ is conjugate to a linear action by Lemma \ref{Lemma: every action on R^n is linear}. We shall say that $\Gamma \actson N$ is \textit{dually ergodic} or  \textit{dually doubly ergodic} if the dual linear action $\Gamma^T \actson \R^n$ is ergodic or doubly ergodic respectively.  
\end{defn}

Now using Theorem \ref{Thm: structure of abelian locally compact groups} and Corollary \ref{Cor: implications of factoriality}, we can write Theorem \ref{Thm: characterization of factoriality for actions on R^n} in the following way: 

\begin{cor}
\label{Cor: actions on connected abelian groups}
    Let $\Gamma \actson N$ be a faithful action of a discrete countable group by continuous automorphisms on a connected locally compact abelian group. Then $L(N \rtimes \Gamma)$ is a factor if and only if $N$ has no non-trivial compact subgroups and $\Gamma \actson N$ is dually ergodic.
\end{cor}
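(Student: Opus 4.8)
The plan is to reduce the whole statement to the already-proved characterization for linear actions on $\R^m$ (Theorem \ref{Thm: characterization of factoriality for actions on R^n}), using the structure theory to pin down the shape of $N$. The first thing I would record is that, for a connected locally compact abelian group $N$, the conditions ``$N$ has no nontrivial compact subgroups'' and ``$N \cong \R^m$ for some $m$'' are equivalent. One direction is immediate since $\R^m$ contains no nontrivial compact subgroup; for the other, point 3 of Theorem \ref{Thm: structure of abelian locally compact groups} gives $N \cong \R^m \times D$ with $D$ discrete torsion-free, and connectedness of $N$ forces the discrete factor $D$ to be trivial. This is exactly what makes the phrase ``dually ergodic'' well-posed precisely when the compact-subgroup condition holds, so that the conjunction on the right-hand side is meaningful.

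For the forward implication I would assume $L(N \rtimes \Gamma)$ is a factor and invoke Corollary \ref{Cor: implications of factoriality}, which yields $N \cong \R^m \times L$ with $L$ totally disconnected. Since $N$ is connected, its direct factor $L$ is connected as well; being also totally disconnected, $L$ is trivial, whence $N \cong \R^m$ and in particular $N$ has no nontrivial compact subgroups. Faithfulness together with Lemma \ref{Lemma: every action on R^n is linear} lets me conjugate the action to a genuine linear action of some $\Gamma < \GL(m,\R)$, and because conjugacy of actions preserves both the isomorphism class of $L(N \rtimes \Gamma)$ and ergodicity of the dual action, Theorem \ref{Thm: characterization of factoriality for actions on R^n} identifies factoriality with ergodicity of $\Gamma^T \actson \R^m$, that is, with dual ergodicity.

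For the converse I would assume $N$ has no nontrivial compact subgroups and that $\Gamma \actson N$ is dually ergodic. By the opening observation $N \cong \R^m$, and Lemma \ref{Lemma: every action on R^n is linear} again conjugates the faithful action to a linear one; unwinding Definition \ref{Def: dually ergodic}, the hypothesis is precisely that $\Gamma^T \actson \R^m$ is ergodic, so Theorem \ref{Thm: characterization of factoriality for actions on R^n} delivers that $L(N \rtimes \Gamma) \cong L(\R^m \rtimes \Gamma)$ is a factor.

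I do not expect a serious obstacle, since the statement is largely a repackaging of earlier results; the only point requiring care is the logical bookkeeping around the right-hand side. Because ``dually ergodic'' is defined only after one knows $N \cong \R^m$, I must verify the no-compact-subgroup condition \emph{before} appealing to dual ergodicity in each direction, rather than treating the two conjuncts as independent. Note also that the case in which $N$ does carry a nontrivial compact subgroup needs no separate argument: it falls under the contrapositive of the forward implication, which forces $L(N \rtimes \Gamma)$ to fail to be a factor. The structure-theoretic fact that a connected, totally disconnected group is trivial is the small piece of glue that makes the reduction to the $\R^m$ case airtight.
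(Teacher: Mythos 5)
Your proof is correct and follows exactly the route the paper intends: the paper gives no separate argument for this corollary, merely noting that it is Theorem \ref{Thm: characterization of factoriality for actions on R^n} rewritten via Theorem \ref{Thm: structure of abelian locally compact groups}, Corollary \ref{Cor: implications of factoriality}, and Lemma \ref{Lemma: every action on R^n is linear}, which is precisely your reduction. Your explicit attention to the fact that ``dually ergodic'' is only well-posed once $N \cong \R^m$ has been established, and that a connected totally disconnected factor must be trivial, fills in the bookkeeping the paper leaves implicit.
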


\section{Ergodicity of twisted skew product actions}
\label{Sec: cocycles}

Let $(X,\mu)$ be a $\sigma$-finite standard measure space, $K$ be a compact abelian group with Haar measure $\nu$ and $\Gamma$ be a countable discrete group. Let $\alpha: \Gamma \actson (X,\mu)$ be a measure preserving action and let $\beta: \Gamma \actson (K,\nu)$ be an action by continuous group automorphisms. We call a Borel map $c: \Gamma \times X \rightarrow K$ a $\beta$-cocycle for $\alpha$ if it satisfies: 
\begin{align*}
    c(h,\alpha_g(x)) \beta_h( c(g,x)) = c(hg,x) \text{ for all } g \in \Gamma \text{ and a.e. } x \in X
\end{align*}
Notice that if $\beta$ is trivial then this gives a genuine 1-cocycle for $\alpha$. Such a twisted cocycle gives a twisted skew product action: 
\begin{align*}
    \rho: \Gamma \actson (X \times K, \mu \times \nu) \text{ by } \rho_g(x,k) = (\alpha_g(x), c(g,x)\beta_g(k))
\end{align*}
Once again, if $\beta$ is trivial, this gives the usual skew-product action. In the results of this section, we analyze the ergodicity of such twisted skew product actions: 

\begin{prop}
\label{Prop: diagonal action ergodic implies skew product ergodic}
     Let $\rho = \rho(\alpha,\beta,c): \Gamma \actson X \times K$ be a twisted skew product action as above and let $\rho_0 = (\alpha \times \beta)$ be the diagonal product action. If $\rho_0$ is ergodic then $\rho$ is ergodic (irrespective of the $\beta$-cocycle $c$). 
\end{prop}
\begin{proof}
     Let $F \in L^\infty(X \times K)$ be a $\rho$-invariant function. Let $\nu$ be the Haar measure on $K$ and consider the following bounded measurable function: 
    \begin{align*}
        C_F(x,t) = \int_{K} F(x,kt)\overline{F(x,k)} \; d\nu(k)
    \end{align*}
    Notice that since $\beta$ preserves $\nu$ and translation is also Haar-preserving, for any bounded measurable function $\phi \in L^\infty(K)$, and a fixed $g \in \Gamma$ and $x \in X$ we have: 
    \begin{align*}
        \int_K \phi(k) d\nu(k) = \int_K \phi(c(g,x)  \beta_g(k)) \; d\nu(k)
    \end{align*}
    This allows us to make the necessary change of variables in the following calculation. 
    \begin{align*}
        C_F(\alpha_g(x),\beta_g(t)) &= \int_K F(\alpha_g(x), k \cdot \beta_g(t)) \overline{F(\alpha_g(x),k)} d\nu(k) \\ &= \int_{K} F(\alpha_g(x), c(g,x)\beta_g(kt)) \overline{F(\alpha_g(x), c(g,x)\beta_g(k))} \; d\nu(k) 
        \\ &= \int_K F(\rho_g(x,kt))\overline{F(\rho_g(x,k))} \; d\nu(k)
        \\ &= \int_K F(x,kt)\overline{F(x,k)} \; d\nu(k)
        \\&= C_F(x,t)
    \end{align*}
    Thus $C_F$ is constant by ergodicity of $\rho_0$. Notice that for a function $\CF \in L^\infty(X \times K)$, the corresponding function $t \mapsto \CF(x,t)$ is in $L^2(K)$. Now for each character $\chi \in \widehat{K}$, and  $\CF \in L^\infty(X \times K)$, let us denote the Fourier coefficient of $t \mapsto \CF(x,t)$ at $\chi$ by $\CF_\chi = \int_K \CF(x,k)\overline{\chi(k)} \; d\nu(k)$. Now we calculate: 
    \begin{align*}
        (C_F)_\chi(x) &= \int_{K} C_F(x,k)\overline{\chi(k)} \; d\nu(k) = \int_K\int_K F(x,kt)\overline{F(x,k)\chi(t)} \; d\nu(k) d\nu(t) \\ &= \int_K\int_K F(x,u)\overline{F(x,k)\chi(uk^{-1})} \; d\nu(u) d\nu(k) \text{ by putting } u = kt
        \\&= \int_K\int_K F(x,u)\overline{F(x,k)\chi(u)}\chi(k) \; d\nu(u) d\nu(k) \\ &= \int_K F(x,u)\overline{\chi(u)} \; d\nu \int_K \overline{F(x,k)}\chi(k) \; d\nu(k) \\ &= F_\chi(x)\overline{F_\chi(x)} = | F_\chi(x) |^2
    \end{align*}

    Since $C_F$ is constant almost everywhere, let $C_F = c$. If $\chi \neq 1$, 
    \begin{align*}
        (C_F)_\chi(x) = \int_{K} c\overline{\chi(k)} \; d\nu(k) =c \cdot \int_{K} \overline{\chi(k)} \; d\nu(k) = 0
    \end{align*}
    The last equality above is because the integral of a character with respect to the Haar measure is always zero. This implies that $\|F_\chi(x)\|^2 = 0$ a.e. Thus the map $k \mapsto F(x,k)$ has all non-trivial Fourier coefficients equal to zero. Hence $F(x,k) = f_0(x)$ for a function $f_0 \in L^\infty(X)$. One checks that since $F$ is $\rho$-invariant, $f_0$ is $\alpha$-invariant. Since $\alpha$ is ergodic, this implies that $f_0$ is constant a.e. Thus $F$ is constant almost everywhere as required. 
\end{proof}

We now address the case when the diagonal product action $\rho_0 = \alpha \times \beta$ is not necessarily ergodic. In this case we can only give some partial conditions. For each $\chi \in \widehat{K}$, let $\Gamma_\chi$ be the stabilizer of the point $\chi$ for the dual action $\widehat{\beta}: \Gamma \actson \widehat{K}$. For each $\chi \in \widehat{K}$, we get an \textit{induced cocycle} $c_\chi: \Gamma_\chi \times X \rightarrow \T$ given by $c_\chi(g,x) = \chi(c(g,x))$. This is a genuine cocycle because every element of $\Gamma_\chi$ fixes $\chi$ under the dual action, indeed: 
\begin{align*}
    c_\chi(h,gx)c_\chi(g,x) &= \chi(c(h,gx))\chi(c(g,x)) = \chi(c(h,gx))\chi(\beta_h(c(g,x))) \\ &= \chi(c(h,gx)\beta_h(c(g,x))) = \chi(c(hg,x)) = c_\chi(hg,x)
\end{align*}

\begin{prop}
\label{Prop: stabilizer ergodicity}
   Suppose that $\Gamma_\chi$
 acts ergodically on $(X,\mu)$ for each $\chi \in \widehat{K}$. Then the following are equivalent: 
 \begin{enumerate}
     \item The twisted skew product $\rho: \Gamma \actson X \times K$ is ergodic
     \item For each non-trivial $\chi \in \widehat{K}$ with a finite $\widehat{\beta}$-orbit, the $\T$-valued cocycle $c_\chi$ is not a coboundary. 
 \end{enumerate}
\end{prop}

\begin{proof}
    First let us prove $2 \implies 1$. Let $F \in L^\infty(X \times K)$ be $\rho$-invariant. For $\chi \in \widehat{K}$, let us define the Fourier coefficient $F_\chi(x)$ of the function $F_x \in L^2(K)$ given by $ k \mapsto F(x,k)$ at $\chi$ as before: 
    \begin{align*}
        F_\chi(x) = \int_{K} F(x,k)\overline{\chi(k)} \; d\mu(k)
    \end{align*}
    Since $\widehat{K}$ is countable and characters form an orthonormal basis of $L^2(K)$ we have from Parseval's identity: 
    \begin{align*}
       \sum_{\chi \in \widehat{K}} |F_\chi(x)|^2 = \sum_{\chi \in \widehat{K}} |\widehat{F_x}(\chi)|^2 = \|F_x\|^2_{L^2(K)} = \int_{K} |F(x,k)|^2 d\mu(k) \leq \|F\|_\infty^2 
    \end{align*}
    Now using the $\rho$-invariance of $F$ we get for any $g \in \Gamma$:
    \begin{align*}
        F_\chi(x) &= \int_K F(x,k) \overline{\chi(k)}\; d\mu(k) = \int_K F(\alpha_g(x), c(g,x) \beta_g(k)) \overline{\chi(k)}\; d\mu(k) \\ &=  \int_{K} F(\alpha_g(x), k)\overline{\chi(\beta_{g^{-1}}(c(g,x)^{-1}k))}\; d\mu(k) \\ 
        &= g\chi(c(g,x)) \int_{K} F(\alpha_g(x), k) \overline{\chi(\beta_{g^{-1}}(k))} \; d\mu(k)  \\ &= g\chi(c(g,x)) F_{g\chi}(\alpha_g(x)) 
    \end{align*}
Now if $g \in \Gamma_\chi$, we have that $F_\chi(x) = \chi(c(g,x))F_\chi(\alpha_g(x))$. Taking absolute values we have $|F_\chi(x)| = |F_\chi(\alpha_g(x))|$. Now since $\Gamma_\chi \actson X$ is ergodic, we have that $|F_\chi|$ is essentially constant. Suppose that $|F_\chi| = \delta_\chi 
\geq 0$. From the invariance equation above we also have that $\delta_{g\chi} = \delta_\chi$ for all $g \in \Gamma$. Thus $\delta$ is constant on each $\Gamma$-orbit of $\widehat{K}$. 

Now if $\delta_\chi > 0$ and $\chi$ has an infinite $\widehat{\beta}$-orbit, this implies that $\sum_{\chi \in \widehat{K}}|F_\chi|^2$ is infinite, which cannot happen. Thus $F_\chi = 0$ for every $\chi$ with an infinite orbit. If $\chi$ has a finite $\widehat{\beta}$-orbit, we get that $\phi \coloneqq \delta_\chi^{-1}F_\chi$ is a measurable $\T$ valued function. In fact for $h \in \Gamma_\chi$ we have: 
\begin{align*}
    \phi(x) = \delta_\chi^{-1} F_\chi(x) = \delta_\chi^{-1} \chi(c(h,x)) F_\chi(\alpha_h(x)) = \chi(c(h,x))\phi(\alpha_h(x)) 
\end{align*}
This implies that $c_\chi(h,x) = \chi(c(h,x)) = \phi(x)\phi(\alpha_h(x))^{-1}$ for a measurable function $\phi: X \rightarrow \T$ and hence $c_\chi$ is a coboundary. This is impossible by condition 2 and hence $\delta_\chi = 0$ for all $\chi \neq 1$. Thus $F$ is independent of the $K$ variable and $F(x,k) = f(x)$ for some $f \in L^\infty(X)$. Since $f$ stays $\alpha$-invariant, $f$ is essentially constant as required.

Now for $1 \implies 2$, suppose that $\chi \neq 1$ is a character with a finite $\widehat{\beta}$-orbit and a measurable function $b: X \rightarrow \T$ such that $c_\chi(g,x) = b(gx)^{-1}b(x)$ for all $g \in \Gamma_\chi$ and a.e. $x\in X$. For $k \in \Gamma$, define the function $B_{k\chi}$ on $X$ by $B_{k\chi}(x) =  \overline{(k\chi)(c(k,\alpha_{k^{-1}}(x)))}b(\alpha_{k^{-1}}(x))$. In fact this function only depends on the element $k\chi$ in the orbit $\Gamma\cdot \chi$, and not on the choice of $k$. Indeed if $k\chi = k'\chi$, then $k' = kh$ for some $h \in \Gamma_\chi$. Letting $y = \alpha_{k'^{-1}}(x)$, the cocycle identity gives: 
\begin{align*}
    c(kh, y) = c(k,\alpha_h(y))\beta_k(c(h,y)) = c(k, \alpha_{k^{-1}}(x))\beta_k(c(h,y))
\end{align*}
Since $k\chi \circ \beta_k = \chi$, we get: 
\begin{align*}
   k\chi(c(kh,y)) = k\chi(c(k, \alpha_{k^{-1}}(x)))\chi(c(h,y)) 
\end{align*}
Now since $c_\chi$ is a coboundary we get: 
\begin{align*}
    \overline{k\chi(c(kh,y))}b(y) &= \overline{k\chi(c(k, \alpha_{k^{-1}}(x))) \chi(c(h,y))}\chi(c(h,y))b(hy) \\ &= \overline{k\chi(c(k, \alpha_{k^{-1}}(x)))}b(hy)
\end{align*}
    Thus $B_{k\chi} = B_{k'\chi}$ and the function is well defined. Now define the function $F$ on $X \times K$ by:
\begin{align*}
    F(x,k) = \sum_{\eta \in \Gamma \cdot \chi} B_\eta(x)\eta(k)
\end{align*}
This is a bounded measurable function on $X \times K$ as $B_\eta$ is bounded measurable. We claim that $F$ is $\rho$-invariant. Notice that this is the same as showing $F_\chi(x) = g\chi(c(g,x))F_{g\chi}(\alpha_g(x))$ for all $g \in \Gamma$ for the Fourier coefficients as defined in the first part of the proof. Using the fact that $gk \chi \circ \beta_g = k\chi$ we get from the cocycle identity: 
\begin{align*}
    gk\chi(c(gk,\alpha_{k^{-1}}(x))) = gk\chi(c(g,x)) k\chi(c(k,\alpha_{k^{-1}}(x)))
\end{align*}
The above equation immediately gives $B_{k\chi}(x) = gk\chi(c(g,x)) B_{gk\chi}(\alpha_g(x))$. Hence $B_\eta(x) = g\eta(c(g,x))B_{g\eta}(\alpha_g(x))$ for all $\eta \in \Gamma \cdot \chi$. Now we calculate: 
\begin{align*}
    F_\chi(x) &= \int_{K} F(x,k)\overline{\chi(k)} \; d\mu(k) = \int_K \sum_{\eta \in \Gamma \cdot \chi} B_\eta(x) \eta(k)\overline{\chi(k)} \; d\mu(k) \\ &= \int_K B_\chi(x) \; d\mu(k) = \int_K g\chi(c(g,x))B_{g\chi}(\alpha_g(x)) \; d\mu(k) \\ &= g\chi(c(g,x))\int_K B_{g\chi}(\alpha_g(x)) \; d\mu(k) \\ &= g\chi(c(g,x))\int_K \sum_{\eta \in \Gamma \cdot \chi} B_\eta(\alpha_g(x))\eta(k)\overline{g\chi(k)} \; d\mu(k) \\ &= g\chi(c(g,x))\int_K F(\alpha_g(x),k)\overline{g\chi(k)} \; d\mu(k) = g\chi(c(g,x))F_{g\chi}(\alpha_g(x)) 
\end{align*}
This shows that $F$ is indeed $\rho$-invariant. Let us now observe that $F$ is non-constant. This is because the Fourier coefficient $F_\chi(x) = B_\chi(x) = \overline{c_\chi(e,x)}b(x) = b(x)$. Thus $|F_\chi(x)| = 1$ which cannot happen if $F$ is constant, thus contradicting ergodicity.  
\end{proof}

\begin{cor}
\label{Corr: diagonal product ergodic under stabilizer ergodicity}
    Suppose that $\alpha: \Gamma \actson X$ and $\beta: \Gamma \actson K$ are both ergodic. If the stabilizers $\Gamma_\chi$ act ergodically on $X$ for all $\chi \in \widehat{K}$ then the twisted skew product $\rho$ is ergodic. In particular when the twisted cocycle $c$ is trivial, then the diagonal product action is ergodic if $\Gamma_\chi \actson X$ is ergodic for all $\chi \in \widehat{K}$.    
\end{cor}
\begin{proof}
    This is clear from proposition $\ref{Prop: stabilizer ergodicity}$ as $\beta$ being ergodic implies all non-trivial $\widehat{\beta}$ orbits are infinite and hence condition 2 in Proposition \ref{Prop: stabilizer ergodicity} is vacuous. 
\end{proof}

From Proposition \ref{Prop: stabilizer ergodicity} we can recover the classical skew-product ergodicity result (see \cite[Corollary 5.4]{SchmidtLectureNotes}). 

\begin{cor}
    Suppose that $\alpha$ is ergodic and $\beta$ is trivial. Then $\rho$ is ergodic if and only if the essential range $E(c)$ of $c$ is $K$.
\end{cor}
\begin{proof}
    As before we shall denote the cocycle $c_\chi(g,x) = \chi(c(g,x))$, which is a genuine cocycle $\Gamma \times X \rightarrow \T$ now. Let us define the annihilator of the cocycle by $\Ann(c) = \{\chi \in \widehat{K} \; | \; c_\chi \text{ is a coboundary}\}$. We claim that $\Ann(c) = E(c)^{\perp}$, where $E(c)$ is the essential range of $c$. Notice that if the claim is true, then $\Ann(c) = \{1\}$ is exactly condition 2 in Proposition \ref{Prop: stabilizer ergodicity}. Clearly $\Ann(c) = \{1\}$ if and only if $E(c) = K$ and by the proposition, it is equivalent to ergodicity of the skew product $\rho$ as required. 

    Now we prove the claim. Suppose first that $\chi \in \Ann(c)$, so $c_\chi$ is a coboundary and there is a measurable function $b: X \rightarrow \T$ such that $\chi(c(g,x)) = b(x)b(gx)^{-1}$ for all $g \in \Gamma$ and a.e. $x \in X$. Suppose that there exists $k \in E(c) < K$ such that $\chi(k) \neq 1$. Pick $\epsilon > 0$ and let $U$ be an open neighbourhood of $k$ such that $|\chi(u) - 1| > \epsilon$ for all $u \in U$. Pick a point $t \in \T$ and choose a measurable subset $E \subset X$ with $0 < \mu(E) < \infty$ such that $|b(x) - t| < \epsilon/4$ for all $x \in E$. Since $k \in E(c)$, there is a $g \in \Gamma$ such that: 
    \begin{align*}
        \mu(E \cap g^{-1}E \cap \{x \; | \; c(g,x) \in U\}) > 0 
    \end{align*}
    So there exists a positive measure subset $E_0 < E$ such that for all $x \in E_0$ we have that $gx \in E$ and thus: 
    \begin{align*}
       &|b(x)b(gx)^{-1} - 1| = |b(x)b(gx)^{-1} - tt^{-1}| \\ = &|b(x)b(gx)^{-1} - b(x) t^{-1} + b(x)t^{-1} - tt^{-1}| \leq \epsilon/2
    \end{align*}
    However since $b(x)b(gx)^{-1} = \chi(c(g,x))$ and $c(g,x) \in U$, in fact we get $|\chi(c(g,x)) - 1|> \epsilon$, which gives a contradiction. Thus $\chi(k) = 1$ for all $k \in E(c)$ and $\chi \in E(c)^{\perp}$.  

    Conversely, since $E(c) = K$ let $c'$ be a cohomologous cocycle to $c$ such that $c'$ takes values in $E(c)$. Since $\chi(k) = 1$ for all $k \in E(c)$, therefore $c'_\chi = \chi \circ c'  = 1$ and hence $c_\chi = \chi \circ c$ is a coboundary as required.  
\end{proof}

\section{Actions on abelian groups of Lie type}
\label{Sec: The general case: double ergodicity}

In this section we shall consider actions $\Gamma \actson N$ on general locally compact abelian groups of Lie type and try to determine when $L(N \rtimes \Gamma)$ is a factor. By Corollary \ref{Cor: implications of factoriality}, $N$ has to be of the form $\R^n \times D$ for a discrete countable group $D$. Such actions have a specific form as we describe below: 

\begin{rem}
    \label{Rem: form of actions}
Consider an action  $\Gamma \actson \R^n \times K$ by measure preserving continuous automorphisms for a compact group $K$. For any $g \in \Gamma$, we have that $gK = K$ as $K$ is compact and $\R^n$ does not have any compact subgroups. As a consequence, we have that the action $\Psi: \Gamma \actson \R^n \times K$ is given by $\Psi_g(x,y) = (\alpha_g(x), c(g,x)\beta_g(y))$ for actions $\alpha: \Gamma \actson \R^n$, $\beta: \Gamma \actson K$ by continuous automorphisms and for a Borel map $c: \Gamma \times \R^n \rightarrow K$. The map $c$ is not a cocycle for the action $\Gamma \times \R^n$, but rather a `twisted cocycle', i.e., it satisfies: 
\begin{align*}
    c(h, gx) \beta_h(c(g,x)) = c(hg,x)
\end{align*}
We will call such a map a \textit{$\beta$-cocycle} for the action $\alpha: \Gamma \actson \R^n$. For convenience of notation, in this case we shall denote the action $\Psi$ by $(\alpha,\beta,c)$. If $c$ is trivial, then $\Psi: \Gamma \actson \R^n \times K$ is a diagonal product of $\alpha$ and $\beta$ and we shall say that the action $\Psi$ \textit{splits}. 

If $D = \widehat{K}$, the dual action $\Phi \coloneqq \widehat{\Psi}$ similarly decomposes into actions $\delta \coloneqq \widehat{\alpha}: \Gamma \actson \R^n$, $\eta \coloneqq \widehat{\beta}: \Gamma \actson D$ and a $\delta$-cocycle $ \omega \coloneqq \widehat{c}: \Gamma \times D \rightarrow \R^n$ for $\eta$, such that $\Phi$ is the twisted skew product given by:
\begin{align*}
    \Phi_g(x,\chi) = (\delta_g(x)\omega(g,\chi), \eta_g(\chi))
\end{align*}
In this case we shall call $\omega$ the \textit{dual cocycle} of $c$. One checks that the dual cocycle $\omega(g,\chi)$ is the unique element of $\widehat{\R^n}$ satisfying: 
\begin{align*}
    e^{2\pi i \langle \omega(g,\chi),x \rangle} = \chi(c(g^{-1},x))
\end{align*}
\end{rem}

Letting $\Phi^0$ be the diagonal action $\delta \times \eta$, we have two distinct cases to deal with, when the dual of $\Phi^0$ is already ergodic, we can apply Proposition \ref{Prop: diagonal action ergodic implies skew product ergodic} to show that the dual of $\Phi$ is also ergodic. We deal with this case in the first subsection below. In the second situation when the dual of $\Phi^0$ is not necessarily ergodic anymore, we can apply Proposition \ref{Prop: stabilizer ergodicity} and give some partial conditions for factoriality, that we record in the second subsection. 

\subsection{Ergodicity of the diagonal action}

\begin{thm}
\label{Thm: main theorem on R^n times K when diagonal action ergodic}
    Let $\Gamma$ be a discrete countable group and $\Psi = (\alpha,\beta,c)$ be a Haar-preserving action of $\Gamma$ on $\R^n \times K$ for a compact abelian group $K$ by continuous group automorphisms. Let $\Psi_0$ be the diagonal product action $\alpha \times \beta$. Suppose one of the following conditions is satisfied: 
    \begin{enumerate}
        \item The action $\alpha: \Gamma \actson \R^n$ is faithful and doubly ergodic and the action $\beta: \Gamma \actson K$ is ergodic.
        \item  The action $\alpha: \Gamma \actson \R^n$ is faithful and ergodic and the action $\beta: \Gamma \actson K$ is mixing.
    \end{enumerate}
    Then $\Psi_0$ and $\Psi$ are both ergodic and the the crossed product $L^{\infty}(\R^n \times K) \rtimes_{\Psi} \Gamma$ is a factor. Ergodicity of $\alpha$ is also necessary for factoriality.
\end{thm}

\begin{proof}
      Since $\alpha$ is faithful, it is essentially free as in Theorem \ref{Thm: characterization of factoriality for actions on R^n}. As a consequence $\Psi$ is essentially free. Suppose condition 1 holds, since $\alpha$ is doubly ergodic, it is weakly mixing by \cite[Theorem 1.1]{GLASNER_WEISS_2016}. Since $K$ is compact, $\beta$ is a pmp ergodic action and hence the diagonal action $\Psi_0$ is ergodic. Now suppose condition 2 holds, then $\alpha$ is properly ergodic as $\Gamma$ is countable, and by Theorem \ref{Thm: Schmidt-Walters theorem}, $\Psi_0$ is ergodic. Thus $\Psi$ is ergodic by Proposition \ref{Prop: diagonal action ergodic implies skew product ergodic} as required.  
\end{proof}

Now we can restate the above in terms of factoriality of semidirect products as follows. Let $N$ be a locally compact abelian group, as before we shall denote the connected component of the identity of $N$ by $N^{\circ}$. We can write $N \cong N^\circ \times N/N^\circ$. As in Remark \ref{Rem: form of actions}, such an action is given by a triplet $(\delta,\eta,\omega)$ where $\delta$ and $\eta$ are actions on $N^\circ$ and $N/N^\circ$ by continuous automorphisms and $\omega: \Gamma \times N/N^\circ \rightarrow N^\circ$ is a $\delta$-cocycle for $\eta$.

\begin{proof}[Proof of Theorem \ref{Thm: main theorem on R^n times D, diagonal action ergodic}]
    By Theorem \ref{Thm: structure of abelian locally compact groups}, we have that $N \cong \R^m \times \T^n \times D$ for a discrete abelian group $D$. Absence of compact connected subgroups implies that $N \cong \R^m \times D$. Consider the dual action $\Psi = \widehat{\Phi}: \Gamma \actson \R^m \times \widehat{D}$. Let $\alpha = \widehat{\delta} : \Gamma \actson \R^m$ and $\beta = \widehat{\eta}: \Gamma \actson \widehat{D}$ denote the components of the dual action. Suppose condition 1 holds, then $\alpha$ is faithful and doubly ergodic. By Proposition \ref{Prop: ergodicity iff infinite orbits}, we have that $\beta$ is ergodic. Then by Theorem \ref{Thm: main theorem on R^n times K when diagonal action ergodic}, the crossed product $L^{\infty}(\R^m \times \widehat{D}) \rtimes_{\Psi} \Gamma$ is a factor. Hence $L(N \rtimes \Gamma)$ is a factor by Theorem \ref{Thm: von neumann algebra of sd product is crossed product of dual action}. Now if condition 2 holds, by Theorem \ref{Theorem: mixing iff no ergodic subgroups}, the dual action $\Gamma \actson \widehat{D}$ is pmp and mixing. Since $\Gamma \actson \R^n$ is obviously properly ergodic, the diagonal action stays ergodic by Theorem \ref{Thm: Schmidt-Walters theorem} and then Proposition \ref{Prop: diagonal action ergodic implies skew product ergodic} gives factoriality.
    
    Conversely, if $L(N\rtimes \Gamma)$ is a factor, then by Corollary \ref{Cor: implications of factoriality}, we get that $N^\circ$ has no non-trivial compact subgroups. Dual ergodicity of $\delta$ follows from the fact the $\alpha$ has to be ergodic if $\Psi$ is ergodic. 
\end{proof}

Theorem \ref{Thm: main theorem on R^n times D, diagonal action ergodic} applies to many natural classes of examples. In particular, we prove Corollary \ref{Corr: splitting actions factoriality for lattices} here: 

\begin{proof}[Proof of Corollary \ref{Corr: splitting actions factoriality for lattices}]
    By \cite[Lemma 5.6]{PopaVaes11}, if $\Gamma$ is a lattice in $\SL(n,\R)$ for $n \geq 3$, then $\Gamma \actson \R^n$ is doubly ergodic. This implies in particular that $\SL(n,\R) \actson \R^n$ is doubly ergodic, which in turn implies that if $\Gamma$ is a dense subgroup then $\Gamma \actson \R^n$ is also doubly ergodic. Clearly such actions are also dually doubly ergodic. The result now follows from Theorem \ref{Thm: main theorem on R^n times D, diagonal action ergodic}.  
\end{proof}

\begin{exmp}
\label{Example: splitting actions on R^m times Z^n}
    In Corollary \ref{Corr: splitting actions factoriality for lattices}, one can consider $D = \Z^n$ and $\Gamma$ any finite index subgroup of $\SL(n,\Z)$. Then for the linear action $\Gamma \actson \R^n \times \Z^n$, the von Neumann algebra of the semidirect product is a factor.  
\end{exmp}

\begin{rem}
\label{Rem: torsion case}
    In the case where $D = N/N^\circ$ is a torsion abelian group, the twisted cocycle always vanishes and any action on $N \times D$ is a diagonal product action. Indeed, for the dual cocycle and each $g \in \Gamma$, the map $n \mapsto \widehat{c}(g,n)$ is a group homomorphism $D \rightarrow \R^n$. Since $D$ is a torsion group, this group homomorphism is trivial for all $g \in \Gamma$. Consequently, the action automatically splits into $\delta$ and $\eta$. 
\end{rem}

\begin{exmp}
    An example of a countable abelian torsion group is the Prüfer $p$-group $\Z_{p^{\infty}}$ for a prime $p$. Recall that $\Z_{p^\infty}$ is the direct limit of cyclic groups of order $p^n$ as $n \rightarrow \infty$. Every countable abelian group $D$ is a direct sum of its divisible and reduced parts $D_d$ and $D_r$  respectively. Recall that a group is called divisible if for all $d \in D$ and positive integer $n$, there is an element $a \in D$ such that $a^n = d$. It is called reduced if the only divisible subgroup is trivial. It turns out that every countable divisible abelian group is a direct sum of copies of $\Q$ and $\Z_{p^{\infty}}$. Hence direct sums of $\Z_{p^\infty}$ are the only divisible torsion abelian groups. There are examples of reduced torsion abelian groups as well: any direct sum of finite cyclic groups. Recall that in Proposition \ref{Prop: examples of mixing actions by Bernoulli shifts} we get examples of actions on such groups such that the dual action is ergodic and even mixing.
    
    There is an elaborate theory of classifying such reduced abelian groups in terms of their so-called Ulm invariants. In fact a result of Kulikov \cite{Kulikov45} says that any torsion abelian group is an extension of a direct sum of finite cyclic groups by a divisible group. We point the reader to \cite{Hill_Survey} for a nice survey of the literature on classification of countable abelian groups.  
\end{exmp}

\begin{rem}
    We remark here that the countable abelian group is torsion if and only if its compact Pontryagin dual is totally disconnected. Notice that as in Remark \ref{Rem: torsion case}, any action of $\Gamma$ on $\R^n \times K$ for a compact totally disconnected group is actually a diagonal action. In particular the cocycle $\Gamma \times \R^n \rightarrow \Affn(K)$ is trivial.
\end{rem}

Recall from Example \ref{example: ergodic actions on R^n} that our examples of linear actions on $\R^n$ (for $n\geq 3$) that are ergodic are also doubly ergodic (for example, lattices and dense subgroups of $\SL(n,\R)$ for $n \geq 3$). Therefore, to apply Condition 2 in \ref{Thm: main theorem on R^n times D, diagonal action ergodic} we need examples of actions on $\R^n$ that are ergodic but not doubly ergodic. One class of examples come from lattices in $\SL(2,\R)$ since their linear actions are not doubly ergodic (see \cite[Lemma 5.6]{PopaVaes11}). 

For the next proposition we note that there exists subgroups of $\SL(2,\Z)$ which are isomorphic to free groups and still have finite index in $\SL(2,\Z)$. This is a strictly 2-dimensional phenomenon, as it can be checked that for $n \geq 3$, a free subgroup of $\SL(n,\Z)$ cannot be finite index. For example, consider the so-called principal congruence subgroups of $\SL(2,\Z)$. For $n \geq 2$, the \textit{principal congruence subgroup of level $n$} denoted by $\Gamma(n) < \SL(2,\Z)$ is the kernel of the surjective group homomorphism $ \rightarrow \SL(2, \Z/n\Z)$. It can also be described as: 
\begin{align*}
    \Gamma(n) = \left\{
\begin{pmatrix}
a & b \\
c & d
\end{pmatrix} \in \mathrm{SL}(2, \mathbb{Z}) \;\middle|\;
a \equiv d \equiv 1 \pmod{n}, \quad b \equiv c \equiv 0 \pmod{n}
\right\}
\end{align*}

It is easy to see that principal congruence subgroups are finite index subgroups of $\SL(2,\Z)$, and hence they are lattices in $\SL(2,\R)$. Hence all principal congruence subgroups act ergodically on $\R^2$. It turns out that for $n \geq 3$, the group $\Gamma(n)$ is isomorphic to a free group.  

Another example of a free subgroup of finite index in $\SL(2,\Z)$ is the Sanov subgroup, first defined in \cite{Sanov}). This is the subgroup generated by the elements: 
\begin{align*}
    \begin{pmatrix}
        1 & 2 \\ 0 & 1 
    \end{pmatrix} \text{   and   } \begin{pmatrix}
        1 & 0 \\ 2 & 1
    \end{pmatrix}
\end{align*}
A classic application of the ping-pong lemma shows that the group generated by these two elements is $\F_2$. It can be checked by writing out the standard form of elements that the Sanov subgroup has index 2 in $\Gamma(2)$, and hence is in particular a finite index subgroup of $\SL(2,\Z)$. Thus it is a lattice and acts ergodically on $\R^2$. 

\begin{prop}
\label{Prop: actions of subgroups of SL(2,Z) that are mixing and ergodic}
    Let $\Gamma$ be a finite-index torsion-free subgroup of $\SL(2,\Z)$ (for example if $\Gamma$ is isomorphic to $\F_n$ for some $n > 1$). Let $\Lambda$ be any non-trivial group and consider the action $\eta: \Gamma \actson \bigoplus_{\Gamma} \Lambda $ by Bernoulli shifts. Let $\omega: \Gamma \times \bigoplus_{\Gamma} \Lambda  \rightarrow \R^2$ be any multiplicative twisted cocycle and let $\Psi = (\delta,\eta,\omega)$ be the twisted skew product action. Then $L((\R^2 \times \bigoplus_{\Gamma}\Lambda) \rtimes_\Psi \Gamma)$ is a factor. In particular when the cocycle is trivial, the diagonal product action gives a factor.  
\end{prop}

\begin{proof}
    Let $H = \bigoplus_{\Gamma} \Lambda$ and notice that $\Gamma \actson H \backslash \{\id\}$ is free by Proposition \ref{Prop: examples of mixing actions by Bernoulli shifts}. Since $\Gamma$ is a finite index subgroup of $\SL(2,\Z)$, we know that $\Gamma$ is a lattice and hence $\Gamma \actson \R^2$ is dually ergodic. The result now follows from Theorem \ref{Thm: main theorem on R^n times D, diagonal action ergodic}.
\end{proof}

We remark here that there are ways to construct examples of ergodic actions $\Gamma \actson \R^n$ that are not doubly ergodic when $n > 2$ as well. For example let $\Gamma < \SL(n,\R)$ be a lattice. By \cite[Lemma 5.6]{PopaVaes11}, the $k$-fold diagonal action $\Gamma \actson (\R^n)^{(k)}$ is ergodic if and only if $k \leq n-1$. Pick $k$ such that $k \leq n-1 < 2k$. Consider the diagonal inclusion of $\SL(n,\R)$ in $\SL(nk,\R)$ and
let $\Gamma^k < \SL(nk,\R)$ denote the image of $\Gamma$. Then the linear action $\Gamma^k \actson \R^{nk}$ is ergodic but $\Gamma^k \actson \R^{nk} \times \R^{nk}$ is not ergodic because this is precisely the $2k$-fold diagonal action $\Gamma \actson (\R^n)^{(2k)}$. By a well-known result of Selberg, every lattice in $\SL(n,\R)$ has torsion-free finite index subgroups, so there is a plethora of lattices in $\SL(n,\R)$ that are torsion-free, and hence one can construct examples similar to Proposition \ref{Prop: actions of subgroups of SL(2,Z) that are mixing and ergodic} in higher dimensions as well.

\subsection{The diagonal action is not necessarily ergodic}

\begin{defn}
\label{Def: Cob_fin}
    Let $\Phi: \Gamma \actson \R^n \times D \cong \widehat{\R^n} \times \widehat{K}$ be given by $(\delta,\eta,\omega)$ as in Remark \ref{Rem: form of actions}. For each $\chi \in D$, we shall denote the $\eta$-stabilizer of $\chi$ by $\Gamma_\chi$. We define the genuine cocycle $\omega_\chi: \Gamma_\chi \times \R^n \rightarrow \T$ by:
    \begin{align*}
        \omega_\chi(h,x) = \exp(2\pi i \langle \omega(h^{-1},\chi), x \rangle)
    \end{align*}
    We denote by $\Cob_{\fin}(\omega)$ the set of elements $\chi \in D$ such that $\chi$ has a finite $\eta$-orbit and $\omega_\chi$ is a coboundary. 
\end{defn}

\begin{lem}
\label{Lemma: little lemma about dual cocycles}
    Let $\Phi = (\delta, \eta, \omega): \Gamma \actson \R^n \times D$ be the dual of $\Psi = (\alpha, \beta, c): \Gamma \actson \R^n \times K$. Suppose that $\delta|_{\Gamma_\chi}: \Gamma_\chi \actson \R^n$ is dually ergodic for all $\chi \in D$. Then $\Psi$ is ergodic if and only if $\Cob_{\fin}(\omega) = \{e\}$. 
\end{lem}
\begin{proof}
    By Proposition \ref{Prop: stabilizer ergodicity}, it is enough to show that if $\chi$ has a finite $\eta$-orbit then $c_\chi$ is a coboundary if and only if $\omega_\chi$ is a coboundary. But this is clear from definition because $\omega_\chi(h,x) = \chi(c(h,x))$.  
\end{proof}

Now we have all the ingredients to prove our final main theorem. 

\begin{proof}[Proof of Theorem \ref{Thm: diagonal action nor ergodic}]
    Since there are no compact connected subgroups we can assume $N \times N/N^\circ = \R^n \times D$ for a discrete abelian group $D$. Suppose the dual action is $\Psi = (\alpha,\beta,c)$ on $\R^n \times K$. By condition 1, $\alpha$ is faithful and hence essentially free, thus $\Psi$ is essentially free. By condition 3, $\Cob_{\fin}(\omega) = \{e\}$ and hence by Lemma \ref{Lemma: little lemma about dual cocycles}, $\Psi$ is ergodic. Thus $L^\infty(\R^n \times K) \rtimes_\Psi \Gamma = L(N \rtimes_\Phi \Gamma)$ is a factor. 
\end{proof}

\begin{exmp}
\label{Exmp: individual ergodicity not enough}
    Let us give an example when $\alpha$ is not doubly ergodic and $\beta$ is not mixing (so we are outside the framework of Theorem \ref{Thm: main theorem on R^n times K when diagonal action ergodic}) and still the diagonal product is ergodic by Corollary \ref{Corr: diagonal product ergodic under stabilizer ergodicity} and hence the dual action gives a factor by Theorem \ref{Thm: diagonal action nor ergodic}. 
    
    Let $\Gamma$ be a a dense copy of $\F_2 = \langle a,b \rangle$ inside $\SL(2,\R)$, which exists by the man result in \cite{BreuillardGelander}. Let $q$ be the quotient from $\Gamma$ to $ \langle b \rangle = \Z$. Let $\Lambda = \ker(q)$, since $\Lambda$ is a normal subgroup of $\Gamma$, $\overline{\Lambda}$ is normalised by $\Gamma$ (because conjugation is a homeomorphism). Since $\overline{\Gamma} = \SL(2,\R)$, we have that $\overline{\Lambda}$ is a closed normal subgroup of $\SL(2,\R)$. Moreover $\overline{\Lambda}$ is not contained in the center because $a \in \Lambda$. Thus $\overline{\Lambda} = \SL(2,\R)$ and hence $\Lambda$ is dense. 
    
    Notice that the linear action $\alpha: \Gamma \actson \R^2$ is ergodic since $\Gamma$ is dense but not doubly ergodic because $\SL(2,\R) \actson \R^2 \times \R^2$ is not ergodic. Consider now the following matrix and action $\beta: \Gamma \actson \T^2$: 
    \begin{align*}
        B = \begin{pmatrix}
            2 & 1 \\ 1 & 1
        \end{pmatrix} \in \SL(2,\Z),  \;\;\; \beta_g(t) = B^{q(g)}(t) \text{ for all } t \in \T^2, g \in \Gamma 
    \end{align*}
    Notice that the dual action $\Gamma \actson \Z^2$ is given by $g \cdot n = (B^{-q(g)})^T n$ and since the image of $q$ is $\Z$, the $\Gamma$-orbit of $n \in \Z^2$ is the orbit under the linear action by $\langle B^T \rangle$. The matrix $B$ has no root of identity as eigenvalues hence the non-trivial orbits of this action are all infinite. Thus $\beta: \Gamma \actson \T^2$ is ergodic. 

    Since the kernel of $q$ is infinite, $B^q(g)$ is trivial for infinitely many $g \in \Gamma$ and hence the stabilizers of a.e. $n \in \Z^2$ are infinite. Therefore the action $\Gamma \actson \T^2$ is not mixing. Let $\Gamma_n$ be the stabilizer of the point $n \in \Z^2$. Notice that $\Lambda < \Gamma_n$ for all $n \in \Z^2$  and since $\Lambda$ is dense in $\SL(2,\R)$, the action $\Lambda \actson \R^2$ is already ergodic, and hence $\Gamma_n \actson \R^2$ is ergodic for all $n \in \Z^2$. Thus by Proposition \ref{Prop: stabilizer ergodicity}, the diagonal action is ergodic. Of course, one can add non-trivial cocycles to the constructions above to get more examples of groups satisfying the conditions of Theorem \ref{Thm: diagonal action nor ergodic}. 
\end{exmp}

\begin{rem}
\label{rem: type of the factor}
    Note that in our main Theorems \ref{Thm: main theorem on R^n times D, diagonal action ergodic} and \ref{Thm: diagonal action nor ergodic}, if $N$ is not discrete, then the factor $L(N \rtimes \Gamma)$ is of type II$_\infty$. This is because the dual action $\Gamma \actson \widehat{N}$ is an infinite measure preserving action on a diffuse standard Borel space. A lot of our results have obvious generalizations to the case when the action is not measure preserving anymore (for example the linear action $\GL(n,\Z) \actson \R^n$). In that case the factors that we obtain are often of type III. We also remark that the class of examples we obtain in this article cover injective and non-injective II$_\infty$ factors. By the results of \cite{PopaVaes11}, when $n \geq 3$, actions of lattices in $\SL(n,\R)$ on $\R^n$ have property (T) and hence the crossed products are not injective. However for example, as in \cite{Aubert1981}, $\SL(2,\Z) \actson \R^2$ is amenable and the crossed product is the injective II$_\infty$ factor.    
\end{rem}

\printbibliography

\end{document}